\newcommand{\titel}{Planar Novikov–Shubin invariants for adjacency matrices of structured directed dense random graphs} 
\numberwithin{equation}{section}
\numberwithin{figure}{section}
\newtheoremstyle{thm-style-oskari}
{7pt}      
{7pt}      
{\itshape} 
{}         
{\scshape} 
{.}        
{.5em}     
{}         
\theoremstyle{thm-style-oskari}
    \newtheorem{theorem}{Theorem}[section]
    \newtheorem{proposition}[theorem]{Proposition}
    \newtheorem{corollary}[theorem]{Corollary}
    \newtheorem{lemma}[theorem]{Lemma}
    \newtheorem{definition}[theorem]{Definition}
    \newtheorem{example}[theorem]{Example}
    \newtheorem{convention}[theorem]{Convention}
    \newtheorem{remark}[theorem]{Remark}
\newcommand{\bels}[2] {
        \begin{equation} \label{#1} \begin{split} 
                #2 
        \end{split} \end{equation}
        }
\newcommand{\bes}[1]{
        \begin{equation*}  \begin{split} 
                #1 
        \end{split} \end{equation*}
        }
\definecolor{olivegreen}{rgb}{0,0.6,0.1}
\newcommand{\nc}{\normalcolor}
\newcommand{\cedit}{}
\newcommand{\bbm}{\mathbbm} 
\renewcommand{\cal}{\mathcal} 
\renewcommand{\frak}{\mathfrak} 
\newcommand{\ol}[1]{\overline{#1} \!\,} 
\newcommand{\wh}{\widehat}
\newcommand{\wt}{\widetilde}
\newcommand{\eps}{\varepsilon}
\renewcommand{\P}{\mathbb{P}}
\newcommand{\E}{\mathbb{E}}
\newcommand{\R}{\mathbb{R}}
\newcommand{\C}{\mathbb{C}}
\newcommand{\N}{\mathbb{N}}
\newcommand{\ee}{\mathrm{e}} 
\newcommand{\ii}{\mathrm{i}} 
\newcommand{\dd}{\mathrm{d}}
\newcommand{\p}[1]{({#1})}
\newcommand{\pb}[1]{\bigl({#1}\bigr)}
\newcommand{\pB}[1]{\Bigl({#1}\Bigr)}
\newcommand{\pbb}[1]{\biggl({#1}\biggr)}
\newcommand{\cb}[1]{\bigl\{{#1}\bigr\}}
\newcommand{\cB}[1]{\Bigl\{{#1}\Bigr\}}
\newcommand{\cbb}[1]{\biggl\{{#1}\biggr\}}
\newcommand{\abs}[1]{\lvert #1 \rvert}
\newcommand{\norm}[1]{\lVert #1 \rVert}
\newcommand{\avg}[1]{\langle #1 \rangle}
\newcommand{\avgb}[1]{\big\langle #1 \big\rangle}
\newcommand{\avgB}[1]{\Big\langle #1 \Big\rangle}
\newcommand{\scalar}[2]{\langle{#1} \mspace{2mu}, {#2}\rangle}
\newcommand{\scalarbb}[2]{\bigg\langle{#1} \,\mspace{2mu},\, {#2}\bigg\rangle}
\DeclareMathOperator{\tr}{Tr}
\DeclareMathOperator{\var}{Var}
\DeclareMathOperator{\im}{Im}
\DeclareMathOperator*{\spec}{Spec}						
\newcommand{\1} {\mspace{1 mu}}
\newcommand{\2} {\mspace{2 mu}}
\newcommand{\mfour}[4]
{
\left(
\begin{array}{cccc}
#1 
\\
#2
\\
#3
\\
#4
\end{array}
\right)
}
\def\blfootnote{\xdef\@thefnmark{}\@footnotetext}
\begin{document}

\blfootnote{Date: \today}
\blfootnote{Keywords: random digraph, stochastic block model, graphon, free probability theory.} 
\blfootnote{MSC2010 Subject Classifications: 60B20, 15B52. }

\title{\vspace{-0.8cm}{\textbf{\titel}}} 
\author{
\begin{tabular}{c} Torben Kr\"uger\thanks{Financial support from VILLUM FONDEN  (Grant No. 10059)  is gratefully acknowledged.  \newline Email: \href{mailto:torben.krueger@fau.de}{torben.krueger@fau.de}}
\\ {\small 
\begin{tabular}{c} 
Friedrich-Alexander-Universität 
\\
Erlangen-Nürnberg
\end{tabular}
} \end{tabular}
\hspace*{0.5cm} \and \hspace*{0.5cm} 
\begin{tabular}{c}David Renfrew
\thanks{
 Email: \href{mailto:renfrew@binghamton.edu}{renfrew@math.binghamton.edu} 
} \\ {\small Binghamton University} 
\end{tabular} }
\date{}

\maketitle
\thispagestyle{empty} 

\begin{abstract}
The Novikov–Shubin invariant associated to a graph provides information about the accumulation of eigenvalues of the corresponding adjacency matrix close to the origin. For a directed graph these eigenvalues lie in the complex plane and 
having a finite value for the planar Novikov–Shubin invariant indicates a polynomial behavior of the eigenvalue density as a function of the distance to zero.  We provide a complete description of these invariants for 
  dense random digraphs with constant batch sizes, i.e. for  the directed stochastic block model.  The invariant depends only on which batches in the graph are connected by non-zero edge densities. We present an explicit finite step algorithm for their computation. For the proof we identify the asymptotic spectral density with the distribution of a $\C^K$-valued circular element in operator-valued free probability theory. We determine the spectral density in the bulk regime by solving the associated Dyson equation and infer the singular behavior of this density close to the origin by determining the exponents associated to the power law with which the resolvent entries of the adjacency matrix that corresponds to the individual batches diverge to infinity or converge to zero. 
\end{abstract}


\section{Introduction}
The stochastic block model is a random graph meant to model inhomogeneities in a disordered network. The vertices of the graph are partitioned into $K$ batches $V_1, \dots, V_K$. {\cedit A vertex} in $V_i$ is connected to {\cedit a vertex} in $V_j$ with probability $p_{ij}$ and these connections are independent. The stochastic block model can either be undirected or directed.  The undirected model with a single batch is called a dense Erd{\H o}s–R\'enyi model. In this work, we consider the directed stochastic block model and for notational simplicity we restrict to  constant batch sizes, i.e. $|V_i|=n$ for each $i=1, \dots, K$.  The undirected model with $K>1$ is  the dense inhomogeneous Erd{\H o}s–R\'enyi model (IERM) and converges in the limit $n \to \infty$ to a graphon in the cut distance \cite{Lovsz2004LimitsOD}. There is an extensive literature on this model and its sparse versions (see e.g. \cite{Bollobs2005ThePT,BORGS20081801,books/daglib/0031021,PhysRevE.66.066121} and references therein). An alternative description of the $n \to \infty$ limit for the dense IERM is provided by operator-valued free probability theory. The centered and appropriately normalized adjacency matrix $H_n:=\frac{1}{\sqrt{n}}(A_n - \E\1 A_n)$ of {\cedit this} undirected stochastic block model  converges to a $\C^K$-valued semicircular element $\frak{s} = \frak{s}^*$ in the sense of non-commutative distributions. \cedit Indeed, the limit of the empirical spectral measure  of $H_n$ is  determined  by a vector Dyson equation (see e.g. \cite{AEK2}). This equation is precisely the one satisfied by the Cauchy-transform of $\frak{s}$ in operator valued free probability theory \cite[Chapter 9]{MingoSpeicher}. 
 In particular, the convergence $\frac{1}{n} \tr p(H_n) \to \tau(p(\frak{s}))$ holds almost surely, where $\tau$ denotes the faithful, tracial state on the underlying $W^*$-probability space, $\cal{A}$, with $\frak{s} \in \cal{A}$ and $p$  a polynomial. 

The centered and normalized adjacency matrix $X_n:= \frac{1}{\sqrt{n}}(A_n - \E\1 A_n)$ of the directed stochastic block model (DSBM) is a random matrix with independent entries of differing variances. \nc It was shown in \cite{Altinhomogeneous, Cookvarprof1} that for such matrices the empirical spectral distribution (ESD) converges to a deterministic limit that corresponds to the Brown measure of the  $\C^K$-valued circular element $\frak{c}$, as the dimension tends to infinity. This measure, which was introduced in \cite{Brownmeasure} as an analog to the spectral measure for  non-Hermitian operators, is radially symmetric, supported on a disk and has a density $\sigma$ with respect the Lebesgue measure away from the origin. 
{\cedit Since the rank of $\E\1 A_n$} is of bounded order and in particular of lower order than $n$, the ESD of $\frac{1}{\sqrt{n}}A_n$ converges to the same limit (see Proposition~\ref{prop:Global law for A}).  {\cedit We note that an additive deformation whose rank is order $n$ rank} will, in general, break the radial symmetry and therefore change the limit. The addition of such deformations to i.i.d. matrices are  considered e.g. in \cite{Khoruzhenko1996,Sniady2002,BordenaveCapitaine2016,CampbellCipolloniErdosJi2024,ErdosJi2023} and to matrices with entries of differing variances in \cite{AK_Brown,AK_pseudo}. In the current work we focus on the local behavior of the density, $\sigma$, of the DSBM at the origin of the complex plane. 
Here the Brown measure has an atom if the matrix of connection probabilities $P=(p_{ij})_{i,j=1}^K$ does not have support, i.e. is not entry-wise bounded from below by a small positive constant times a permutation matrix, (see Appendix \ref{sec:Non-negative matrices} for information about non-negative matrices) and in this case $A_n$ is not invertible. In the case when $P$ has support, the Brown measure has a density  at the origin. However, this density may have a singularity at zero which can be interpreted as a measure of the invertibility of $A_n$. 
The Novikov–Shubin {\cedit invariant 
\bels{Novikov Shubin}{
c_{\rm NS}:=\lim_{t \downarrow 0}\frac{\textstyle{\log \int_{|z| \le t} \sigma(z) \dd z}}{\log t}\,.
}
is} a common way to quantify this density blow-up in Hermitian models that have power law growth, at zero, of the cumulative distribution function associated with the density $\sigma$. 
The invariant corresponds to the exponent in this power law \cite[Chapter 2]{Luck2002}. \cedit It was first introduced in \cite{NovikovShubin1986MorseIneqVonNeumann} and further developed in \cite{GromovShubin1991VonNeumannSpectraNearZero} to study spectral properties near zero of Laplace operators on Riemannian manifolds.  Since the Novikov–Shubin invariant quantifies the accumulation of eigenvalues at zero, it can be used to control large-time heat kernel decay, and thus gives global information for the manifold. \nc  In the Hermitian case, $\sigma$ is supported on the real line and the integral in \eqref{Novikov Shubin} is taken over $z \in \R$. 
 Since our directed model is non-Hermitian, we define the planar analog of the Novikov–Shubin invariant in exact analogy to \eqref{Novikov Shubin} with an integral over $z \in \C$.
Our main result shows that $c_{\rm NS}$ is finite for DSBM when $P$ has support and we provide an algorithm for computing this invariant in terms of the zero structure of $P$. In particular,  $c_{\rm NS}=c_{\rm NS}(P)$ only depends on the position of nonzero entries inside $P$.

The problem of invertibility of operators is of great interest and has many applications in Random Matrix Theory and Free Probability, \cite{mai2024fugledekadison,hoffmann2023computing,Arizmendi2024Universal,Shlyakhtenko2015Free,Rudelson2008Littlewood}.  {\cedit Novikov-Shubin invariant was} used in \cite{Shlyakhtenko2015Free} to prove the non-existence of atoms in noncommuting polynomials of free random variables, and in \cite{hoffmann2023computing} to give bounds on algorithm run times to prove invertibility of matrices in non-commuting elements, for the non-commutative Edmonds problem. In \cite{Kruegersingularity}, we considered Hermitian block random matrices and computed the severity of the blowup in the density, which follows a power law with rational exponent. {\cedit Concurrent with \cite{Kruegersingularity}, this blowup in the density was also obtained by O. Kolupaiev, \cite{Kolupaiev21}, for block variance profiles whose non-zero blocks are on and immediately above the anti-diagonal. Additionally in \cite{Kolupaiev21} the non-zero variances in the individual blocks are allowed to be non-constant with uniform bounds from above and away from zero. }

This result also proves that the Novikov–Shubin invariant for the undirected IERM  with associated connection probabilities $P$ is a positive rational number that only depends on the location of non-zero entries in $P$, whenever $P$ has support.

The proof in \cite{Kruegersingularity} relies on solving a discrete boundary value problem on the set of batch indices, whose solution on index $i$  provides the exponents with which the resolvent of the underlying matrix $H_n$, averaged {\cedit over the $i^{th}$ batch} and in the $n \to \infty$ limit, tends to zero or diverges to infinity as the spectral parameter approaches zero. In the current work we use Girko's Hermitization  \cite{Girko1985,Girko-book} to reduce the problem of determining the Brown measure of the non-Hermitian matrix $X_n$ to determining the spectral properties of a family of Hermitian matrices, indexed by the spectral parameter $z \in \C$, of twice the dimension of $X_n$ in a vicinity of the origin. Similar to \cite{Kruegersingularity} this leads to an equation for the exponents (cf. \eqref{non-herm min max averaging}), and the resolvent of this Hermitization of $X_n$, when averaged over the batches, diverges or approaches zero as the planar spectral parameter $z\in \C$ approaches the origin as given by the solution to this equation. Structurally, this equation is more complicated than the one encountered in  \cite{Kruegersingularity}  and does not have any given boundary values. In particular, contrasting the Hermitian setting, this equation does not, in general, admit a unique solution. It turns out, however, that we observe a cancellation in Girko's formula that makes this non-uniqueness not an obstacle for computing the behavior of the Brown measure $\sigma$.

\bigskip
\noindent
{\bf Notation}: We now introduce a few notations that are used throughout this work. We use the comparison relation $\varphi \lesssim \psi $ (or $\psi \gtrsim \varphi$) between two positive quantities $\varphi,\psi>0$ if there is a constant $C=C(S)>0$, only depending on the variance profile $S$, such that $\varphi \le C\2 \psi $. We write $\varphi \sim \psi $ in case $\varphi \lesssim \psi $ and $\varphi \gtrsim \psi $ both hold. When a vector is compared with a scalar, it is meant that relation holds in each component of the vector.  Furthermore, we identify constant vectors and scalars.  For vectors $f=(f_i)_i, g=(g_i)_i$ we interpret  $f \lesssim g $ entrywise, i.e. $f_i \lesssim g_i $ holds for all $i$. In general, we consider $\C^d$ as an algebra with entrywise operations, i.e. we write $\phi(f):=(\phi(f_i))_{i}$ for a function $\phi: \C \to \C$ applied to a vector $f$  and $fg:=(f_ig_i)_i$ for the product of two vectors. Our scalar products are normalized, meaning that for $f,g \in \C^d$, $\scalar{f}{g}:= \frac{1}{d}\sum_i\ol{f}_ig_i$ and we use the short hand $\avg{f}:=\scalar{1}{f}=\frac{1}{d}\sum_i f_i$ for the average of a vector.

\section{Main results}

Let $A = (a_{\alpha\beta}^{ij})_{i,j=1, \dots K}^{\alpha, \beta = 1, \dots ,n}\in  \R^{N \times N} = \R^{K \times K} \otimes \R^{n \times n}$ with $N = n \1K$ be the adjacency matrix of a dense directed stochastic block model, i.e. $a_{\alpha\beta}^{ij}$ are independent and ${\rm Bern}(p_{ij})$-distributed, {\cedit with $p_{ij} \in [0,1)$}, for $i,j =1, \dots, K$ and $\alpha, \beta =1, \dots, n$.  We set $S:=(s_{ij})_{i,j=1}^K $ with  $s_{ij}:= p_{ij}(1-p_{ij}) = \var(a_{\alpha\beta}^{ij})$ the variance profile of $A$.
We assume that $P=(p_{ij})_{i,j=1}^K$ and with it  ${S}$ is irreducible, as the spectrum of $A$ can be computed separately on each irreducible component.

The self-consistent density of states associated with $S$ was introduced in { \cite[(2.5)]{Altinhomogeneous} (see 
Remark 2.6 of \cite{Cookvarprof1} for an alternative description)} and  is a probability measure  on the complex plane with density $\sigma$ away from the origin, given by the identity
\bels{eq:defsig}{
\sigma(z) = -\frac{1}{2\pi}\Delta_z \int_0^\infty \pbb{\avg{v(\abs{z}^2, \eta)}- \frac{1}{1+\eta}} \dd \eta 
}
for $z \ne 0$, where $v=v(\tau, \eta) \in (0,\infty)^K$ {\cedit is the unique positive solution (see Remark \ref{Rem:unique solution}, below) to} the coupled systems of equations
\bels{Dyson equation with eta}{
\frac{1}{v} = \eta + Sw + \frac{\tau}{\eta + S^t v}\,, \qquad \frac{1}{w} = \eta + S^tv + \frac{\tau}{\eta + S w},
}
together with $w=w(\tau, \eta) \in (0,\infty)^K$, where $\eta, \tau>0$. The following proposition identifies $\sigma$ as the limiting spectral density associated to the bulk eigenvalues of $A$. There are up to $K$ eigenvalues of size $O(n)$ which do not contribute to the density $\sigma$ in the $n \to \infty$ limit. 
\begin{proposition}\label{prop:Global law for A}
Let $A $ be the adjacency matrix of the directed stochastic block model as defined above. Then 
\begin{equation}\label{eq:gloabal law for A}
\frac{1}{n\1K} \sum_{w \in \spec(A) }\delta_{\frac{1}{\sqrt{n}}w} \to \sigma( z)\dd^2 z  + \varrho\1 \delta_0
\end{equation} 
weakly in probability, where $\varrho = 1- \int_{\C} \sigma(z) \dd^2 z$. 
\end{proposition}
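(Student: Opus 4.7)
The strategy is to decompose $\frac{1}{\sqrt n} A = X_n + F_n$ with $F_n := \frac{1}{\sqrt n}\E A$ and to analyse each piece separately. Since $\E A = P \otimes J_n$, where $J_n$ denotes the $n\times n$ all-ones matrix, the deterministic matrix $F_n$ has rank at most $K$. The centred piece $X_n$ is a random matrix with independent mean-zero entries and variance profile $S$, and by \cite{Altinhomogeneous, Cookvarprof1} its ESD converges weakly in probability to the Brown measure $\mu_\frak{c}$ of the $\C^K$-valued circular element $\frak{c}$ with covariance $S$.

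First I would identify $\mu_\frak{c} = \sigma(z)\dd^2 z + \varrho\delta_0$. Writing $\mu_\frak{c} = -\frac{1}{2\pi}\Delta_z L(z)$ with $L(z) := \tau\pa{\log\abs{\frak{c}-z}}$ for the logarithmic potential and representing $L$ as the $\eta$-integrated trace of the resolvent of the Hermitization of $\frak{c} - z$ at imaginary argument $\ii\eta$, the trace identifies with $\avg{v(\abs{z}^2,\eta)}$ where $v$ solves the $\C^K$-valued Dyson equation \eqref{Dyson equation with eta} at $\tau = \abs{z}^2$. Subtracting the reference $1/(1+\eta)$ to make the integral convergent produces \eqref{eq:defsig}. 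Because $\mu_\frak{c}$ is a probability measure with continuous density $\sigma$ on $\C\setminus\{0\}$ and is radially symmetric, any missing mass $\varrho = 1 - \int_\C\sigma$ must be concentrated at $z=0$.

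Next I would show that the rank-$K$ perturbation $F_n$ does not affect the limiting ESD. Girko's Hermitization gives, for any $f\in C_c^\infty(\C)$,
\begin{equation*}
\int f\,\dd\pb{\mu_{A/\sqrt n} - \mu_{X_n}} = \frac{1}{4\pi nK}\int_\C \Delta f(z)\,\log\absb{\det\pa{I + (X_n-z)^{-1}F_n}}^2\,\dd^2z
\end{equation*}
on the event that $X_n - z$ is invertible on $\supp\Delta f$. Because $\mathrm{rank}(F_n)\le K$, the matrix determinant lemma yields $\abs{\det(I + (X_n-z)^{-1}F_n)}\le(1+\norm{(X_n-z)^{-1}}\norm{F_n})^K$, and the reverse bound follows by swapping the roles of $X_n - z$ and $A/\sqrt n - z$. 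Using $\norm{F_n}\lesssim\sqrt n$ deterministically and polynomial lower bounds $s_{\min}(X_n - z),\,s_{\min}(A/\sqrt n - z)\gtrsim n^{-c}$ valid uniformly on compact $z$-sets with overwhelming probability (from \cite{Altinhomogeneous, Cookvarprof1} combined with singular value interlacing for rank-$K$ perturbations), the integrand is $O(K\log n)$ and the whole expression is $O(\log n/n)$, vanishing in probability. A standard density argument extends the result from $C_c^\infty$ test functions to continuous bounded ones, giving the claimed weak convergence.

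The main obstacle is the smallest-singular-value control for the perturbed matrix $A/\sqrt n - z$, because $F_n$ has diverging operator norm and such bounds cannot be inherited directly from those for $X_n - z$. Singular value interlacing for the rank-$K$ perturbation provides the fix: at most $2K$ singular values of $A/\sqrt n - z$ lie outside the range spanned by those of $X_n - z$, and these outlying values contribute at most $O(K\log n)$ to the log-determinant, which is absorbed by the prefactor $1/(nK)$. This yields the required cancellation and completes the plan.
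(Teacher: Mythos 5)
Your high-level plan—decompose $\frac{1}{\sqrt n}A = X_n + F_n$, use the known convergence of the ESD of $X_n$ from \cite{Altinhomogeneous,Cookvarprof1}, and control the effect of the rank-$K$ deformation via Girko's Hermitization—is sound and agrees with the paper's strategy in outline. However, the argument you give for the central technical step is wrong, and the step is precisely where all the real work happens.

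You claim that the polynomial lower bound on $s_{\min}(A/\sqrt n - z)$ follows from the corresponding bound for $X_n - z$ "combined with singular value interlacing for rank-$K$ perturbations," and later that "at most $2K$ singular values of $A/\sqrt n - z$ lie outside the range spanned by those of $X_n - z$, and these outlying values contribute at most $O(K\log n)$ to the log-determinant." Weyl interlacing for a rank-$K$ perturbation only bounds the \emph{number} of displaced singular values; it says nothing about how small the displaced ones can become. In particular, the $K$ smallest singular values of $A/\sqrt n - z$ are not constrained from below by those of $X_n - z$ at all—they could be exponentially small or identically zero—and then the log-determinant contribution would not be $O(K\log n)$. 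Since $\norm{F_n}\sim\sqrt n$ diverges, this is exactly the regime where a low-rank deformation can destroy the spectrum near zero, and the absence of such an effect requires a genuinely probabilistic argument, not linear algebra.

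This gap is what the paper's Lemma~\ref{lmm:smallest ev} is for: it gives the polynomial lower bound on $s_{\min}(A+B_n)$ for the non-centred matrix directly, by permuting $A$ into a form with non-degenerate diagonal blocks and iterating a Schur-complement reduction (Lemma~\ref{lmm:smallest ev as corr}) that feeds each block into the Tao–Vu least singular value theorem \cite{tao2008}. Without an independent bound of this type, your proof does not close. The rest of your outline (identifying $\sigma$ with the Brown measure, the Girko log-potential formula, the matrix determinant lemma giving $O(K\log n)$ once the smallest singular values \emph{are} polynomially controlled) is fine; you just need to replace the interlacing claim with an actual smallest-singular-value estimate for the deformed matrix.
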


The proof of this proposition follows an argument that is by now well established and relies on Girko's Hermitization trick, that was first used on random matrices with i.i.d. entries \cite{Girko1985, Girko-book}. For the convenience of the reader we recall this argument in the appendix.

\begin{remark} \label{Rem:unique solution}
 The system of equations \eqref{Dyson equation with eta} is called the Matrix Dyson Equations (MDE) for the Hermitization of $\frac{1}{\sqrt{n}}(A-\E\1 A)$ which is a Hermitian random matrix of dimension $2 n \1K$ (see Appendix~\ref{sec:Girko's trick} for details on how it is used). Its solution is unique \cite{Helton01012007}.   In  \cite[Example 2.3]{Cookvarprof1}, it is shown that an irreducible $S$ satisfies the admissibility assumption \cite[Assumption A.2]{Cookvarprof1}.  This implies that \eqref{Dyson equation with eta} and its solution can be extended to $\eta =0$ and its solution is strictly positive for $\tau \in (0,\rho(S))$, where $\rho(S)$ denotes the spectral radius of $S$ \cite[Prop. 3.2]{Altinhomogeneous},\cite[Theorem 2.2]{Cookvarprof1}.  
 The $\eta \to 0$ limit of solution to \eqref{Dyson equation with eta} is also the positive unique solution to the equation with $\eta =0$, once the additional condition  $\langle v \rangle = \langle w \rangle $ is added.
In \cite[Lemma~4.1]{Altinhomogeneous}, it was shown that when the entries of $S$ are bounded from above and below, $\sigma(z)$ is positive and bounded from above on the disk with radius $\sqrt{\rho(S)}$ centered at the origin. This bound was extended to variance profiles, $S$, that are fully indecomposable \cite[Proposition 2.7 and Theorem 2.8]{Cookvarprof2}, where it was also shown that the density is strictly positive on its support away from $z=0$ for any irreducible $S$. 
\end{remark}

 If $S$ does not have support,  i.e. any multiple of $S$ is not bounded from below by some permutation matrix (see Appendix~\ref{sec:Non-negative matrices} for the precise definition),  then by Frobenius-K{\"o}nig theorem it has a large block of zeros. It was shown in \cite{Kruegersingularity}, for the analogous Hermitian model that this implies there is an atom at $0$ for the limiting spectral measure. By considering the Hermitization of our model, this result implies the Brown measure of the non-Hermitian model also has{ \cedit an atom at zero, i.e. $\varrho>0$} in \eqref{eq:gloabal law for A} and the Novikov-Shubin invariant in \eqref{Novikov Shubin} is infinite. 
In the following, we therefore assume that $S$ has support, {\cedit and will show that, under this assumption, $\varrho =0$}.

 Under the assumption that $S$ has support, by \cite[Theorem 4.2.6]{brualdi_ryser_1991}, there exist two permutation matrices, $Q_1,Q_2$,
that bring $S$ into the  block upper triangular form
\nc
\bels{def wt S}{
\wt{S}:=Q_1SQ_2^t = 
\left(
\begin{array}{cccc}
\wt S_{11} & &&\bf{\star}
\\
 & \wt S_{22} &&
 \\
 & & \ddots&
 \\
 \bf{0}&  && \wt S_{LL}
\end{array}
\right)
}
where $\wt S_{11}, \wt S_{22}, \dots, \wt S_{LL}$ are  primitive square matrices with positive main diagonal.  By Lemma~\ref{lmm:FID matrices} the matrices $\wt S_{ii}$ are fully indecomposable (FID), which means they do not have a zero submatrix  whose added row and column dimensions are equal to the dimension of $\wt S_{ii}$  (we recall this notion in Appendix~\ref{sec:Non-negative matrices}).   The identity \eqref{def wt S} provides an $L \times L$-block structure for $\wt{S} = (\wt S_{lk})_{l,k=1}^L$.

We denote $Q:= Q_1Q_2^t $ and introduce the relations on $\llbracket L \rrbracket$ given by:
\bels{def of orders}{
 l \LHD  k \quad :\Leftrightarrow  \quad  \wt S_{lk} \ne 0 \text{ and } l \ne k\,, \qquad l \prec  k \quad :\Leftrightarrow  \quad  Q_{lk} \ne 0\,,
}
where $Q_{lk}$ denotes the $(l,k)$ block inside $Q$ according to the block structure of $\wt{S}$ in \eqref{def wt S}.
 We then use this relation to define an edge labeled {\cedit directed} graph on $\llbracket L \rrbracket$, with edges labeled by $ \LHD $ and $\prec$.  
 The set of edges of $\#$-type on $\llbracket L \rrbracket$ we denote by  $\cal{E}^\#:=\{(l,k) \in \llbracket L \rrbracket^2 : l \# k\}$, where $\#= \LHD, \prec$.   
 A path $\gamma = e_1 e_2 \dots e_\ell$ is a finite concatenation of edges $e_l =(k_{l-1},k_{l}) \in \cal{E}^\LHD \cup \cal{E}^\prec$ and we denote $\abs{\gamma}:=\ell$ and $\avg{\gamma}:=\abs{\{l\in\llbracket \ell \rrbracket: e_l \in \cal{E}^\prec \}}$. We call $\gamma$ a cycle when $k_0 = k_{\ell}$. Note that since $S$ is irreducible the graph  $\llbracket L \rrbracket$  with edge set  $\cal{E}^\LHD \cup \cal{E}^\prec$ is strongly connected, i.e. for any two indices $l,k \in \llbracket L \rrbracket$ there is a path from $l$ to $k$. Indeed, this statement means that there is a product of the matrices $(\bbm{1}(l\LHD k))_{l,k=1}^L$ and $(\bbm{1}(l\prec k))_{l,k=1}^L$ such that the  $(l,k)$-entry is positive. By the definition of the edges, this holds true in particular if a product of powers of the matrices $Q$ and $\wt{S}^\circ$ has a non-zero $(l,k)$-block, where $\wt{S}^\circ$ is the $L \times L$ block matrix resulting from $\wt{S}$ by setting the diagonal blocks $\wt{S}_{ii}$ equal to zero. Instead we  can consider  products of the matrices $Q$ and $Q+Q\wt{S}^\circ$. 
 Since the blocks $\wt{S}_{ii}$ have a positive diagonal, the non-zero blocks of $Q+Q\wt{S}^\circ$ and $Q\wt{S}$ coincide and it is therefore sufficient to show that a product of $Q$ and $Q\wt{S}$ has a non-zero $(l,k)$-block.
That $Q$ is a permutations matrix implies  $Q^t$ is a power  of $Q$, and it is thus enough to show that  $Q^t \wt{S} = Q_2 S Q_2^t$  is irreducible, which is equivalent to the  irreducibility of $S$. This, however, is true by our assumptions.

Our main theorem shows that $\sigma$ has a singularity of severity $\abs{z}^{-2(1-\kappa)}$ at the origin. 
\begin{theorem}\label{thr:Singularity degree}  
Let $S$ be an irreducible non-negative matrix, with support. Let the minimal ratio of $\prec$-edges to {\cedit total edges}, as defined in \eqref{def of orders}, on cycles be 
\begin{equation}\label{eq:defkapp}
\kappa:= \min_{\gamma} \frac{\avg{\gamma}}{\abs{\gamma}} \in (0,1]
\end{equation}
where the minimum is taken over all cycles $\gamma$.
Then near the origin, the self-consistent density of states, $\sigma$, given in \eqref{eq:defsig}, blows up like:
\[
\lim_{z \to 0} \frac{\log \sigma(z)}{\log |z|} = -2+2\1\kappa\,.
\]
\end{theorem}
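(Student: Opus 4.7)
The plan is to analyze the solution $(v,w)$ of the Matrix Dyson Equation \eqref{Dyson equation with eta} in the joint regime $\tau = |z|^2 \to 0$ and $\eta \downarrow 0$, extract block-wise power laws for the components of $v$ and $w$, and then read off the density $\sigma$ through Girko's formula \eqref{eq:defsig}. As a first step, I would pass to the permuted coordinates $\wt v := Q_1 v$ and $\wt w := Q_2 w$, in which \eqref{Dyson equation with eta} becomes
\begin{equation*}
\frac{1}{\wt v} = \eta + \wt S\1\wt w + \frac{\tau}{\eta + Q\1\wt S^t\wt v}, \qquad \frac{1}{\wt w} = \eta + \wt S^t\wt v + \frac{\tau}{\eta + Q^t\wt S\1\wt w},
\end{equation*}
so that $\wt S$ is block upper triangular with FID diagonal blocks $\wt S_{ll}$. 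Invoking the FID comparability estimates developed in \cite{Kruegersingularity} shows that inside each block $l$ the components of $\wt v$ and $\wt w$ are mutually comparable. Consequently the asymptotic behavior of the solution is encoded by two block-exponent vectors $(\alpha_l)_{l=1}^L$, $(\beta_l)_{l=1}^L$ characterizing $\wt v_l \sim |z|^{\alpha_l}$ and $\wt w_l \sim |z|^{\beta_l}$.

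Matching leading powers of $|z|$ in the two block-wise equations yields the coupled min-max system for $(\alpha,\beta)$ referenced as \eqref{non-herm min max averaging}: off-diagonal blocks of $\wt S$ (the $\LHD$-edges of \eqref{def of orders}) transport exponents between $\alpha$ and $\beta$ without a shift, while the permutation $Q$ (the $\prec$-edges) combined with the prefactor $\tau = |z|^2$ shifts exponents by $+2$. Following a cycle $\gamma$ of length $|\gamma|$ containing $\avg\gamma$ many $\prec$-edges through this system accumulates a total shift of $2\avg\gamma$ distributed over $|\gamma|$ matching steps. Feasibility of the local min-max constraints then forces the effective exponent controlling $\avg{\wt v} = \avg v$ to be the minimum of $2\avg\gamma/|\gamma|$ over all cycles in $(\llbracket L\rrbracket, \cal E^\LHD \cup \cal E^\prec)$, which by \eqref{eq:defkapp} equals $2\kappa$.

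To pass back to the density, I would establish an expansion
\begin{equation*}
F(\tau) := \int_0^\infty \pbb{\avg{v(\tau,\eta)} - \frac{1}{1+\eta}} \dd\eta = F(0) + c\, \tau^{\kappa} + o(\tau^{\kappa}), \qquad \tau \downarrow 0,
\end{equation*}
with $c \ne 0$, from the block-exponent asymptotics of the previous step. Since $F$ is a function of $\tau = |z|^2$ alone, inserting this into \eqref{eq:defsig} together with the radial Laplacian identity $\Delta_z f(|z|^2) = 4\pb{|z|^2 f''(|z|^2) + f'(|z|^2)}$ produces $\sigma(z) \sim |z|^{2\kappa - 2}$ as $|z| \to 0$, which is precisely the asserted logarithmic limit.

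The principal obstacle lies in the second step. Unlike the Hermitian setting of \cite{Kruegersingularity}, the non-Hermitian system couples two unknown exponent vectors through two different classes of edges, and as noted in the introduction it need not admit a unique solution. The rescue, also flagged by the authors, is a cancellation in Girko's Hermitization that makes $F(\tau)$, and hence $\sigma$, insensitive to the particular branch of $(\alpha, \beta)$ chosen and dependent only on the minimizing cycle ratio $\kappa$. Isolating this cancellation precisely, and upgrading the leading-order exponent matching into a genuine asymptotic expansion of $F$ valid uniformly enough in $\eta$ to justify applying $\Delta_z$, is where I expect the bulk of the technical work to concentrate.
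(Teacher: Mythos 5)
Your overall picture — pass to the permuted coordinates, extract block-wise power laws from the Dyson equation, reduce to a min-max cycle problem, then read off $\sigma$ — correctly anticipates the paper's strategy, but there are two substantive gaps that your write-up flags but does not resolve, and one misidentification of the governing quantity.

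First, you introduce two independent exponent vectors $(\alpha_l)$ and $(\beta_l)$ for $\wt v$ and $\wt w$. The paper's Lemma~\ref{lmm:complementary scaling} establishes the much stronger complementary scaling $\wt v_k \sim \avg{\wt v_k} \sim 1/\wt w_k \sim 1/\avg{\wt w_k}$, which is obtained by first proving $\wt v \wt w \sim 1$ (via the variational principle for $v$ and the identity $\wt v \wt S\wt w = Q(\wt w\wt S^t\wt v)$). This collapses your two exponent vectors into a single vector $\wh f$ with $\beta_l = -\alpha_l$, and without it the min-max system \eqref{non-herm min max averaging} does not close in the clean form the paper derives. Second, you assert that "the effective exponent controlling $\avg{\wt v}=\avg v$" is $2\kappa$. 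That is not the right quantity: the constraint $\avg v = \avg w$ together with complementary scaling forces $\max_k\wh f_k = -\min_k\wh f_k$, and $\avg v$ does \emph{not} carry the $\kappa$-information. What scales like $\tau^\kappa$ is $\avg{1-vSw}$ (Corollary~\ref{crl:scaling problem}), equivalently $\avg{vw}\sim\tau^{\kappa-1}$, and these are what enter the density.

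Third — and this is where the real obstacle you correctly sense is resolved by a different idea — you propose to expand $F(\tau)=\int_0^\infty(\avg{v}-\frac{1}{1+\eta})\dd\eta = F(0)+c\,\tau^\kappa + o(\tau^\kappa)$ and then apply $\Delta_z$. Differentiating such a leading-order expansion twice requires control you do not have (an $o(\tau^\kappa)$ remainder cannot be differentiated termwise), and you acknowledge you would need uniformity in $\eta$ that is hard to obtain. The paper avoids this entirely by invoking the closed formula \eqref{sigma through Fr} from \cite{AK_Brown}, which represents $\sigma$ directly as the quadratic form $\frac{1}{\pi}\scalarb{\sqrt{vw}}{(1-\cal K_r)^{-1}\sqrt{vw}}$. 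One then shows $\spec(F_r)\subset[-1+\eps,1]$ and that $\wh v=\sqrt{vSw}\to 1$, so $\norm{\cal K_r}\to 0$ and $\sigma\sim\frac{1}{\pi}\avg{vw}\sim\tau^{\kappa-1}=|z|^{2\kappa-2}$, with no derivative of an asymptotic series ever taken. Without this formula, or a substitute for it, your step from exponents to density does not go through.
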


\begin{corollary}[Novikov–Shubin invariant] The planar Novikov–Shubin invariant, defined by \eqref{Novikov Shubin}, for  the directed stochastic block model with a matrix of connection probabilities $P$ that is irreducible and has support is $c_{\rm NS} = 2\kappa$ with $\kappa$ defined in  \eqref{eq:defkapp}. In particular, $c_{\rm NS} = 2 \frac{1+\ell_1}{1+\ell_1+\ell_2}$ for two non-negative integers $\ell_1, \ell_2$. 
\end{corollary}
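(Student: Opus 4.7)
The plan is to deduce both statements directly from Theorem~\ref{thr:Singularity degree} together with the combinatorics of the labelled graph defined in \eqref{def of orders}. Since the analytic content is already packaged inside Theorem~\ref{thr:Singularity degree}, I expect no real obstacle: what remains is a routine integration plus one combinatorial remark about cycles in a block upper-triangular structure.

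For the Novikov--Shubin exponent itself I would fix $\eps>0$ and first turn the pointwise log-asymptotics of Theorem~\ref{thr:Singularity degree} into a two-sided power law. Since $\log\abs{z}<0$ for $\abs{z}<1$, the statement $\log\sigma(z)/\log\abs{z}\to -2+2\kappa$ produces a $\delta=\delta(\eps)>0$ such that
\bes{
\abs{z}^{-2+2\kappa+\eps}\le \sigma(z)\le \abs{z}^{-2+2\kappa-\eps},\qquad 0<\abs{z}<\delta.
}
Integrating in polar coordinates on the disk of radius $t<\delta$ then gives
\bes{
\frac{2\pi}{2\kappa+\eps}\1 t^{2\kappa+\eps}\le \int_{\abs{z}\le t}\sigma(z)\1\dd^2 z\le \frac{2\pi}{2\kappa-\eps}\1 t^{2\kappa-\eps},
}
and taking the logarithm, dividing by $\log t<0$, and sending first $t\downarrow 0$ and then $\eps\downarrow 0$ yields $c_{\rm NS}=2\kappa$.

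For the rational normalisation I would note that any closed walk decomposes into simple cycles, so that $\avg{\gamma}/\abs{\gamma}$ is a convex combination of the corresponding simple-cycle ratios; hence the infimum in \eqref{eq:defkapp} is attained on the finite set of simple cycles on $\llbracket L\rrbracket$ and $\kappa=\avg{\gamma^\star}/\abs{\gamma^\star}\in\Q$ for some minimizing simple cycle $\gamma^\star$. It then remains to show that every cycle uses at least one $\prec$-edge, i.e.\ $\avg{\gamma}\ge 1$. This follows from the block upper-triangular form \eqref{def wt S}: the relation $l\LHD k$ requires $\wt S_{lk}\ne 0$ with $l\ne k$, which by \eqref{def wt S} forces $l<k$, so any walk built only of $\LHD$-edges is strictly increasing in the block index and cannot close. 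Setting $\ell_1:=\avg{\gamma^\star}-1\ge 0$ and $\ell_2:=\abs{\gamma^\star}-\avg{\gamma^\star}\ge 0$ (the latter being the number of $\LHD$-edges used by $\gamma^\star$), the identity $c_{\rm NS}=2(1+\ell_1)/(1+\ell_1+\ell_2)$ follows. The only points requiring any care are the sign of $\log t$ when converting log-asymptotics to power-law bounds and the above monotonicity observation for $\LHD$-walks, both of which are immediate from the definitions.
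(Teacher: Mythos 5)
Your proposal is correct. The paper states the corollary without proof, treating it as an immediate consequence of Theorem~\ref{thr:Singularity degree}, and your argument correctly supplies the routine details that the authors omit: converting the pointwise logarithmic asymptotics into two-sided power bounds (with the sign of $\log t$ handled properly), integrating in polar coordinates, and then passing to the limit. The combinatorial remark is also right: since $\wt S$ is block upper triangular with the diagonal blocks absorbed into the $\LHD$-free loops, a $\LHD$-edge strictly increases the block index, so no closed walk consists solely of $\LHD$-edges, giving $\avg{\gamma}\ge 1$ and hence $\kappa>0$; and decomposing a closed walk into simple cycles shows $\kappa$ is a ratio of two bounded integers. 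One small remark on your parenthetical: $\ell_2=\abs{\gamma^\star}-\avg{\gamma^\star}$ counts edges of $\gamma^\star$ in $\cal{E}^\LHD\setminus\cal{E}^\prec$ rather than all $\LHD$-edges (an edge could lie in both $\cal{E}^\LHD$ and $\cal{E}^\prec$), but this does not affect the algebra or the conclusion. The fact that a minimizing cycle exists and uses at least one edge in $\cal{E}^\prec\setminus\cal{E}^\LHD$ is also established in the paper via the weight analysis in Proposition~\ref{prp:kappa = delta} and its use in Corollary~\ref{crl:scaling problem}; your decomposition-into-simple-cycles argument is an independent and somewhat more elementary route to the same finiteness/rationality facts.
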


We now briefly compare the results of \cite{Kruegersingularity}, in the Hermitian case, with the current results. In \cite{Kruegersingularity}, the first step was to conjugate $S$ by a single permutation matrix into a normal form, similar to \eqref{def wt S}, but preserving the Hermitian symmetry. From this normal form the degree of singularity in the density was computed from the length of the longest down left part through the anti-diagonal of the matrix. In particular, the  singularity degree is given as a rational function of a single natural number. In the current paper, we require two permutation matrices to bring $S$ into normal form and the degree of the singularity depends not just on the resulting normal form but also the interaction between this normal form and the product $Q$ of these two permutation matrices. This leads to a richer class of singularities, parametrized by two natural numbers.

\section{Behavior at the singularity of the solution to the Dyson Equation}
In this section we consider the limit of \eqref{Dyson equation with eta} when $\eta  \downarrow 0$ and $\tau \downarrow 0$. For $\eta=0$ the solution to \eqref{Dyson equation with eta} is no longer unique,  because if $(v,w)$ is a solution then so is $(\alpha\1v,\alpha^{-1}w)$ for any  $\alpha>0$. To resolve this issue, we consider only the solution that results from taking the $\eta  \downarrow 0$ limit of solutions at positive $\eta$. Since for $\eta>0$ the solution satisfies the equality $\langle v \rangle = \langle w \rangle$, this constraint is inherited   for $\eta=0$. Additionally, we assume $\tau \in (0, \rho(S))$, so the solution is strictly positive.

At $\eta=0$ with $\tau < \varrho(S)$ the Dyson equation takes the form
\bels{Dyson equation}{
 \frac{1}{v} = Sw + \frac{\tau}{S^tv}\,, \qquad \frac{1}{w} = S^tv + \frac{\tau}{Sw}\,,
}
which has a unique positive solution with the additional assumption that $\avg{v} =\avg{w}$, \cite[Theorem 2.2]{Cookvarprof1}. 
When replacing the  two  terms $ \frac{\tau}{S^tv},  \frac{\tau}{Sw}$ both with a constant $\eta>0$,  equations \eqref{Dyson equation}  describe the asymptotic behavior of the singular value density of $X$ in the vicinity of the origin. This setup was studied in \cite{Kruegersingularity}. Since in the current setup the two additional terms depend on  the solution itself the analysis of  equation \eqref{Dyson equation} is  much more delicate.

Equations \eqref{Dyson equation} are written equivalently as
 \bels{Dyson equation-version 2}{
 v = \frac{S^t v}{(Sw)(S^t v) + \tau}\,, \qquad   w = \frac{Sw}{(Sw)(S^t v) + \tau}\,.
 }
 As a consequence we see that 
 \bels{vw symmetry}{
 v Sw  = w S^t v\,.
 }
 In particular, this identity allows us to write \eqref{Dyson equation} in terms of $v$. Multiplying the second equation by $w$ gives
\bels{vw equation with 1 on lhs}{
 1 = w S^tv + \frac{vw\tau}{vSw} = w S^tv+ \frac{vw\tau}{w S^tv} =  w S^tv + \frac{v\tau}{S^tv} \,.
 }
 Therefore, we find 
 \[ w = \frac{1}{S^t v} \left(1 - \frac{v\tau}{S^tv}  \right)\,.\]
 Substituting this back into the first equation yields
 \bels{vDyson equation}{
 \frac{1}{v} =S\pB{\frac{1}{S^tv}\pB{1 - \frac{\tau v}{S^t v}}} + \frac{\tau}{S^t v}\,.
}
Additionally, because $w>0$, we have that $\left(1 - \frac{v\tau}{S^tv}  \right)>0$, which implies that $ 0 < v < \frac{1}{\tau} S^t v$.

 We now use the permutations from the normal form of $S$, \eqref{def wt S}, to permute the indices of $v$ and $w$ and let  $\wt{v}:= Q_1 v$,  $\wt{w}:=Q_2 w$ with $Q= Q_1Q_2^t $. Then \eqref{Dyson equation} is equivalent to  
\bels{tilde equations}{
 \frac{1}{\wt{v}} = \wt{S}\wt{w} + Q\frac{\tau}{\wt{S}^t\wt{v}}\,, \qquad \frac{1}{\wt{w}} = \wt{S}^t\wt{v} +Q^t \frac{\tau}{\wt{S}\wt{w}}\,,
}
the identities \eqref{Dyson equation-version 2} become 
\[
\wt{v} = \frac{Q\wt{S}^t\wt{v} }{(Q\wt{S}^t\wt{v})(\wt{S}\wt{w} ) + \tau}\,, \qquad Q\wt{w}= 
\frac{\wt{S}\wt{w} }{(Q\wt{S}^t\wt{v})(\wt{S}\wt{w} ) + \tau}\,,
\]
and \eqref{vw symmetry} takes the form 
\bels{vw symmetry with P}{
\wt{v}\wt{S}\wt{w} = Q(\wt{w}\wt{S}^t \wt{v})\,. 
}
We denote $\wt{v} =(\wt{v}_1, \dots, \wt{v}_L)$ and $\wt{w} =(\wt{w}_1, \dots, \wt{w}_L)$ according to the block structure of $\wt{S}$.

\begin{lemma} \label{lmm:complementary scaling}
There is a constant $C>0$, such that uniformly for all $\tau \le \rho(S)/2$ the polynomial bounds 
\bels{power law bound}{
\tau^C \lesssim v \lesssim \tau^{-C}\,,\qquad  \tau^C \lesssim w \lesssim \tau^{-C},
}
are satisfied. Furthermore,  
\bels{v average scaling}{
\wt{v}_k\sim \avg{\wt{v}_k} \sim \frac{1}{\wt{w}_k} \sim \frac{1}{\avg{\wt{w}_k}} \,, 
}
hold true for all $k \in \llbracket L \rrbracket$  and 
\bels{monotonicity for v}{
\avg{\wt{v}_k} \lesssim \avg{\wt{v}_l}\,, \qquad \avg{\wt{w}_l} \lesssim \avg{\wt{w}_k}\qquad \text{ for all }\; k \LHD l\,.
}
\end{lemma}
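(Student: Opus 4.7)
The plan is to prove the three claims in the order stated: the polynomial bounds \eqref{power law bound} first, the complementary scaling \eqref{v average scaling} within each diagonal block second, and the monotonicity \eqref{monotonicity for v} last. The starting point is the bundle of componentwise inequalities obtained by multiplying \eqref{Dyson equation} by $v$ and $w$: from $1 = v(Sw) + v\tau/(S^tv)$ and its analogue \eqref{vw equation with 1 on lhs} one reads off $v(Sw) \leq 1$, $w(S^tv) \leq 1$, $\tau v \leq S^tv$ and $\tau w \leq Sw$ entrywise. Together with the symmetry $vSw = wS^tv$ from \eqref{vw symmetry} and the normalisation $\avg{v} = \avg{w}$, these will be the workhorses throughout.

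For the polynomial bound, I would first show that $\avg{v} = \avg{w} \sim 1$ uniformly for $\tau \leq \rho(S)/2$: the upper bound follows by averaging $v Sw \leq 1$ and comparing $\avg{vSw}$ with $\avg{v}\avg{w}$ using positivity of the Perron eigenvector of $S$, while the lower bound comes from the identity $1/v = Sw + \tau/(S^tv)$ combined with the strict inequality $\tau < \rho(S)$, which prevents $\avg{v}$ from collapsing. Next, iterate the relation $S^tv \geq \tau v$ to get $(S^t)^N v \geq \tau^N v$; choosing $N$ large enough that $(S^t)^N$ has strictly positive entries (possible by irreducibility), this converts to the componentwise bound $v_i \lesssim \avg{v}\tau^{-N} \lesssim \tau^{-N}$. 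The matching lower bound $v_i \gtrsim \tau^N$ is obtained analogously after bounding $(Sw)_i + \tau/(S^tv)_i$ from above, using the already established upper bound on $w$ together with the iterated lower bound on $S^tv$. The same argument applied to $w$ yields the other half of \eqref{power law bound}.

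For the complementary scaling, restrict the first equation of \eqref{tilde equations} to block $k$ and drop the non-negative off-block contributions to obtain $\wt v_k \1 \wt S_{kk}\wt w_k \leq 1$ componentwise; the dual equation gives $\wt w_k \1 \wt S_{kk}^t \wt v_k \leq 1$. The delicate -- and main -- step is to match these with reverse inequalities of the same order. This is where primitive full indecomposability of $\wt S_{kk}$ enters: some power of $\wt S_{kk}$ has strictly positive entries, so $\wt S_{kk}$ spreads any positive vector into one whose components are all comparable to its average. I would run a self-consistent argument inside block $k$, treating the perturbation terms $(Q\tau/\wt S^t\wt v)_k$ and the upper off-diagonal contributions $\sum_{j > k} \wt S_{kj}\wt w_j$ as corrections that are controlled by the polynomial bounds of the previous step, and conclude that $\wt v_k$ and $\wt w_k$ are each componentwise comparable to their averages within the block and that $\wt v_k \wt w_k \sim 1$. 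The full chain of $\sim$-relations in \eqref{v average scaling} then follows.

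For the monotonicity, take $k \LHD l$, so $\wt S_{kl} \neq 0$ with $k \neq l$. The first equation of \eqref{tilde equations} restricted to block $k$ gives $1/\wt v_k \geq (\wt S\wt w)_k \geq \wt S_{kl}\wt w_l$ componentwise. Averaging and using the block-level uniformity from \eqref{v average scaling} (so $\avg{1/\wt v_k} \sim 1/\avg{\wt v_k}$ and $\avg{\wt S_{kl}\wt w_l} \sim \avg{\wt w_l}$, the latter because $\wt S_{kl}$ is a fixed non-zero matrix and $\wt w_l$ is smooth on its block) produces $1/\avg{\wt v_k} \gtrsim \avg{\wt w_l}$; combining this with $\avg{\wt w_k} \sim 1/\avg{\wt v_k}$ from \eqref{v average scaling} yields $\avg{\wt w_l} \lesssim \avg{\wt w_k}$, and the dual bound $\avg{\wt v_k} \lesssim \avg{\wt v_l}$ follows from \eqref{v average scaling} applied to both blocks. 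The hardest part of the whole argument is the matching lower bound in step two: a priori the perturbation terms in \eqref{tilde equations} could compete with the diagonal contribution $\wt S_{kk}\wt w_k$, and effectively decoupling block $k$ from the rest relies crucially on the primitive FID structure of $\wt S_{kk}$ together with the uniform polynomial control from step one.
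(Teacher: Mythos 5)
Your plan has two genuine gaps that make it unworkable as stated, both stemming from not using Proposition~\ref{prp:variational problem for v}, which is the engine of the paper's proof.

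First, the claim $\avg{v}=\avg{w}\sim 1$ uniformly in $\tau$ is false, so the proposed derivation of \eqref{power law bound} breaks down at the very start. To see this, note that once \eqref{v average scaling} is known, the normalisation $\avg{v}=\avg{w}$ together with $\wt v_k\sim 1/\wt w_k$ only forces $\avg{\wt v_k}\avg{\wt v_l}\sim 1$ pairwise (or more precisely $\max_k \wh f_k=-\min_k\wh f_k$ in the notation that follows the lemma); in a two-block example with $\avg{\wt v_1}\sim\tau^{-c}$ and $\avg{\wt v_2}\sim\tau^{c}$ one has $\avg{v}\sim\tau^{-c}\gg 1$. The paper instead bounds the scale of $v$ by feeding the Perron--Frobenius eigenvector $s$ of $S^t$ into the functional $J$ from \eqref{def of J}: the bound $J(v)\le J(s)\le 1-\log(\tau/\rho(S))$ in \eqref{upper bound on Jv} gives the componentwise estimate $S^t v\lesssim \tau^{1-K}v$, which combined with $\tau v\le S^t v$ and irreducibility pins down \eqref{power law bound} without ever asserting $\avg{v}\sim 1$.

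Second, the ``self-consistent argument inside block $k$'' for \eqref{v average scaling}, treating the off-block contributions $\sum_{l\ne k}\wt S_{kl}\wt w_l$ and the $\tau$-terms as perturbations controlled by the polynomial bounds, cannot succeed: the polynomial bounds only say $\tau^{2C}\lesssim\wt w_l/\wt w_k\lesssim\tau^{-2C}$, which does not make these terms small relative to the diagonal contribution $\wt S_{kk}\wt w_k$. What actually allows the paper to control them is the much stronger, $\tau$-uniform estimate $\wt S^t\wt v\sim\wt v$ and $\wt S\wt w\sim\wt w$ (equation~\eqref{wtS w sim w}), obtained from the log-average bound $-\avg{\log(\wt v/\wt S^t\wt v)}\le\tau$ coming again from $J(v)\le J(s)$, together with a global argument: multiplying \eqref{tilde equations} by $\wt v$ to get $\wt v\wt w\lesssim 1$, using the symmetry \eqref{vw symmetry with P} to propagate $\wt v\wt w$ around the cycles of $Q$, and deducing $\wt v\wt w\gtrsim 1$ by a contradiction along a full $Q$-cycle. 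Primitivity of $\wt S_{kk}$ is then used only at the very end, once $\wt v\wt w\sim 1$ is in hand, to flatten $\wt v_k,\wt w_k$ to their block averages. Your sketch skips exactly the variational input and the $Q$-cycle step, which are the hard parts; the primitivity observation alone does not decouple the blocks. Your treatment of \eqref{monotonicity for v} is fine given \eqref{v average scaling} and $\wt S\wt w\sim\wt w$, and matches the paper.
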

Before we prove this lemma, we  first provide a classification of the solution of the Dyson equation, \eqref{Dyson equation}, as the minimizer of the functional 
\bels{def of J}{
J: \bigg\{x \in \R^K : 0< x < \frac{1}{\tau} S^t x\bigg\} \to [1, \infty)\,, \qquad 
 J(x):= \avgB{\frac{\tau x}{S^t x}} - \avgB{\log \frac{\tau x}{S^t x}} 
 }
 for $\tau>0$. 
 Note that $J$ is invariant under scaling, i.e. $J(\alpha\1 x) = J(x)$ for all positive numbers $\alpha  >0$. The lower bound $J(x) \ge 1$ is a consequence of the fact that $\beta  - \log \beta \ge 1$ for all $\beta >0$.   We will see below that the directional derivate of this functional (see \eqref{eq:dir deriv J}) recovers the equation for $v$, i.e.  \eqref{vDyson equation}.

\begin{proposition}[Variational problem for $v$] \label{prp:variational problem for v} Up to rescaling, the solution, $v$, to the Dyson equation \eqref{Dyson equation} is the unique minimizer for $J$, i.e., $v$ lies in the domain of $J$ and for $ J_\ast:=\inf_{x} J(x)$ we have $J(v) = J_\ast$. Furthermore, any minimizing sequence $x^{(n)}$ with $\lim_{n \to \infty}J(x^{(n)}) = J_\ast$ and $\avg{x^{(n)}}=\avg{v}$ satisfies $x^{(n)} \to v$. 
\end{proposition}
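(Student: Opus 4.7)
The plan is to recognize $v$ as the unique critical point of $J$ modulo scaling and then use convexity of $J$ in logarithmic coordinates to promote this to the unique minimizer, with the global uniqueness of the Dyson equation solution serving as the backbone of the argument.

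First I would verify that $v$ lies in the domain of $J$: positivity is given, and $\tau v < S^t v$ follows from $w > 0$ combined with the identity $w = \frac{1}{S^t v}\pb{1 - \tau v/(S^t v)}$ derived in the passage to \eqref{vDyson equation}. A direct computation of the gradient yields
\bes{
K\,\partial_{x_j} J(x) = \frac{\tau}{(S^t x)_j} - \frac{1}{x_j} + \sB{S\pB{\tfrac{1}{S^t x}\pb{1 - \tfrac{\tau x}{S^t x}}}}_j\,,
}
so the Euler--Lagrange equation of $J$ is exactly \eqref{vDyson equation}, and each solution determines a positive $w$ via $w = \frac{1}{S^t x}\pb{1 - \tau x/(S^t x)}$ that makes $(x,w)$ a Dyson pair. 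By \cite[Theorem 2.2]{Cookvarprof1} the Dyson equation has a unique positive solution under $\avg{v} = \avg{w}$, so critical points of $J$ form a single orbit of the scaling symmetry $x \mapsto \alpha x$, and $v$ is the unique critical point on the slice $\avg{x} = \avg{v}$.

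For the minimality I would pass to logarithmic coordinates $u = \log x$ and write $J(e^u) = \frac{1}{K}\sum_i f(\alpha_i(u))$ with
\bes{
f(\alpha) = e^\alpha - \alpha\,, \qquad \alpha_i(u) = u_i - \log(S^t e^u)_i + \log\tau\,.
}
Here $f$ is strictly convex on $\R$ and strictly decreasing on $(-\infty, 0)$, while each $\alpha_i$ is concave in $u$ (a linear function minus the convex log-sum-exp). The composition rule ``convex decreasing $\circ$ concave is convex'' then yields convexity of each $f(\alpha_i)$, and hence of $J(e^u)$, along any segment in $u$-coordinates lying in $D_u := \{u : \alpha_i(u) < 0 \text{ for all } i\}$. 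Moreover, if $J(e^{u+th})$ is affine in $t$ along such a segment, then both non-negative summands in $\frac{d^2}{dt^2} f(\alpha_i) = f''(\alpha_i)(\alpha_i')^2 + f'(\alpha_i)\alpha_i''$ must vanish separately (using $f''>0$, $f'<0$ on $\alpha<0$, and $\alpha_i'' \le 0$ by concavity), forcing $\alpha_i'(u+th) \equiv 0$ for each $i$. Differentiating $\beta_i(e^{u+th}) = \text{const}$ then gives $\sum_k S_{ki} x_k (h_k - h_i) = 0$ for each $i$, and irreducibility of $S$ propagates the equality $h_k = h_i$ to all pairs, so $h \in \R \vec 1$. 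Hence $J$ is strictly convex in logarithmic coordinates modulo the one-dimensional scaling direction.

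Finally, I would restrict to the compact slice $\mathcal{K} := \{x \ge 0 : \tau x \le S^t x,\ \avg{x} = \avg{v}\}$. Since $J(x) \to \infty$ whenever some $x_i \to 0$, the infimum $J_\ast$ of the lower semi-continuous extension of $J$ to $\mathcal{K}$ is attained at some $x^\star > 0$, and any subsequential limit $x^\infty$ of a minimizing sequence with $\avg{x^{(n)}} = \avg{v}$ also lies in $\mathcal{K}$ with $x^\infty > 0$. Combining the local strict convexity of $J$ modulo scaling near $v$ with the uniqueness of the critical point on the slice forces every such limit to coincide with $v$, yielding $J(v) = J_\ast$ and $x^{(n)} \to v$. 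The main obstacle is that the logarithmic domain $D_u$ is in general not convex (the set $\{\alpha_i < 0\}$ is the complement of a convex sublevel set of the convex function $-\alpha_i$, as can be verified by direct examples), so the naive ``convex function on convex domain has unique minimum'' argument is not directly available and must be supplemented by the global uniqueness from the Dyson equation to exclude putative minimizers on the boundary $\{\beta_i = 1\}$.
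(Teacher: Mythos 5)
Your skeleton is right in its main ingredients: the Euler--Lagrange equation of $J$ does recover \eqref{vDyson equation}, the map $x \mapsto (x,y)$ with $y = \frac{1}{S^tx}\pb{1-\tau x/(S^tx)}$ does reproduce a Dyson pair, and Dyson-equation uniqueness on the slice $\avg{x}=\avg{v}$ does pin down the interior critical point. But the convexity machinery in logarithmic coordinates, while correct as far as it goes, is superfluous. Once every accumulation point of a minimizing sequence with $\avg{x^{(n)}}=\avg{v}$ is known to be a strictly interior point of the domain, the Euler--Lagrange equation plus Dyson uniqueness already force that accumulation point to equal $v$, and $x^{(n)}\to v$ follows with no convexity input. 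This is exactly what the paper does, and it sidesteps the non-convexity of the logarithmic domain $D_u$ that you flag as an obstacle.

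The genuine gap is precisely the interiority of accumulation points, and the patch you propose does not close it. Dyson-equation uniqueness applies only to solutions of the \emph{unconstrained} Euler--Lagrange equation \eqref{v equation}; at a boundary point where $\tau x^\ast_i = (S^tx^\ast)_i$ stationarity involves a Lagrange multiplier, \eqref{v equation} does not hold there, and there is nothing to feed into the Dyson equation. The paper closes this with a first-order KKT-type argument: via \eqref{eq:dir deriv J}, if the $i$-th constraint is active the terms $\frac{\tau}{S^tx^\ast}-\frac{1}{x^\ast}$ cancel in that component and $\nabla_{e_i}J(x^\ast) = \avgb{e_i\2 S\pb{\frac{1}{S^tx^\ast}\pb{1-\frac{\tau x^\ast}{S^tx^\ast}}}}$ is strictly positive unless the constraint is also active at every $j$ with $s_{ij}>0$; iterating through irreducibility of $S$ forces $\tau x^\ast=S^tx^\ast$, contradicting Perron--Frobenius since $\tau<\rho(S)$. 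A secondary imprecision in your compactness step: it is not automatic that $J(x)\to\infty$ as some $x_i\to 0$, since $\tau x_i/(S^tx)_i$ may remain bounded when $(S^tx)_i\to 0$ at a comparable rate. The correct argument, as in the paper, is that boundedness of $J(x^{(n)})$ then forces $(S^tx^{(n)})_i\to 0$, and propagating this through the irreducibility of $S$ drives all components to zero, contradicting $\avg{x^{(n)}}=\avg{v}>0$.
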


\begin{proof}
We begin by showing that the accumulation points of any minimizing sequence $x^{(n)}$ for $J$ with $\avg{x^{(n)}}=\avg{v}>0$ lie inside the domain of $J$. First, we will show that
  $\inf_{n \in \N}x^{(n)}>0$. Indeed, suppose that for some $i \in \llbracket  K \rrbracket$ we have  $x^{(n)}_i \to 0$. Since 
 \[
\frac{ \tau x^{(n)}_i}{(S^tx^{(n)})_i} - \log \frac{ \tau x^{(n)}_i}{(S^tx^{(n)})_i}
 \]
 remains bounded for all $i \in \llbracket  K \rrbracket$, we conclude 
 \[
 \liminf_{n \to \infty} \frac{  x^{(n)}_i}{(S^tx^{(n)})_i}>0\,.
 \]
Therefore we have  $(S^tx^{(n)})_i \to 0$, i.e. $x^{(n)}_j \to 0 $ for all $j \in \{k : s_{ki}>0\}$. Continuing this argument shows that $x^{(n)}_j \to 0 $ for all indices $j$ such that $(\ee^{S})_{ij}>0$. Since $S$ is irreducible this is the case for all $j$. Thus, $x^{(n)} \to 0$, which contradicts $\avg{x^{(n)}}=\avg{v}$.

Now let $x^\ast$ be an accumulation point of the minimizing sequence
 with  $0<x^\ast \le \frac{1}{\tau}S^t x^\ast $. We show that $x^\ast < \frac{1}{\tau}S^t x^\ast$. Indeed suppose 
 $x^\ast_i = \frac{1}{\tau}(S^t x^\ast)_i$ for some $i \in \llbracket  K \rrbracket$. Then 
 \[
 \nabla_{e_i} J (x^\ast)= 
 \avgB{e_iS\pB{\frac{1}{S^t x^\ast}\pB{1-\frac{\tau x^\ast}{S^t x^\ast}}}}\,,
 \]
 where we extended $J(x)$ to all $x>0$. Here we used that the directional derivative of $J$ in the direction $h \in \R^K$ is 
  \begin{equation}\label{eq:dir deriv J}
 \nabla_h J(x) = 
 \avgB{h\pB{\frac{\tau }{S^t x}-S\frac{\tau x}{(S^t x)^2}+S\frac{1}{S^t x}-\frac{1}{x}}}\,.
 \end{equation}
Therefore we get the contradictory statement $J(x^\ast - \eps\1 e_i) <J_\ast = J(x^\ast)$ for sufficiently small $\eps>0$ unless $x^\ast_j = \frac{1}{\tau}(S^t x^\ast)_j$ for all $j  \in \{k : s_{ik}>0\}$. Again by continuing this argument and using that $S$ is irreducible, we get 
 $x^\ast = \frac{1}{\tau}S^t x^\ast$. But the spectral radius of $S$ is $\rho(S)$ and $\tau <\rho(S)$.  This contradicts the Perron-Frobenius Theorem.  
 
 We conclude that an accumulation point  $x=x^\ast$ with $\avg{x}=\avg{v}>0$ of the minimizing sequence lies in the domain of $J$. Thus it satisfies the equation 
  \bels{v equation}{
 \frac{1}{x}=S\pB{\frac{1}{S^tx}\pB{1 - \frac{\tau x}{S^t x}}} + \frac{\tau}{S^t x}\,, \qquad 0< x < \frac{1}{\tau} S^t x\,.
 }
 Now we show that this equation has a unique solution with $\avg{x} =\avg{v}$.  From \eqref{vDyson equation} we see that $v$ solves this equation,  ensuring existence. For the uniqueness, suppose $x$ solves \eqref{v equation} and $\avg{x} = \avg{v}$. Then we define 
  \[
 y:=\frac{1}{S^tx}\pB{1 - \frac{\tau x}{S^t x}}>0\,.
 \]
 By definition of $y$ and \eqref{v equation} we get
   \bels{x y equation}{
 \frac{1}{x} = Sy + \frac{\tau}{S^t x}\,, \qquad 
 \frac{1}{y} = \frac{(S^tx)^2}{S^t x-\tau x}= S^tx +\frac{\tau x S^tx}{S^t x-\tau x} =S^tx +\frac{\tau }{Sy} \,,
  }
   where the last identity holds because the first equation in \eqref{x y equation} implies 
  \[
  \frac{S^t x-\tau x}{ x S^tx}= \frac{1}{x} - \frac{\tau}{S^tx } = Sy\,.
  \]
 With the replacement $(x,y) \to (v,w)$ we see that $(x,y)$ solves the Dyson equation \eqref{Dyson equation}. But this equation has a unique solution, up to the rescaling $(v,w) \to (\alpha \1 v, \frac{1}{\alpha} w)$. Thus, \eqref{v equation} has a unique solution with the normalization $\avg{x}=\avg{v}$. Altogether this finishes the proof of the proposition. 
\end{proof}

\begin{proof}[Proof of Lemma~\ref{lmm:complementary scaling}] 
By Proposition~\ref{prp:variational problem for v}, we know  $v$ minimizes the functional $J$ from \eqref{def of J}. In particular, with $s>0$ being the Perron-Frobenius eigenvector for $S^t$, i.e. $S^ts=\rho(S)s$ we see that 
\bels{upper bound on Jv}{
J(v) \le J(s) = \frac{\tau}{\rho(S)} -\log \frac{\tau}{\rho(S)}\,.
}
Since $\inf_{\beta >0}(\beta - \log\beta) = 1$ we infer the first inequality of 
\[
 K-1 -\log \frac{ \tau v_i}{(S^tv)_i} \le K\1J(v) \le K\1\pB{ \frac{\tau}{\rho(S)} -\log \frac{\tau}{\rho(S)}} \le K\pB{1 -\log \frac{\tau}{\rho(S)}}
\]
 for any index $i \in \llbracket  K \rrbracket$. We conclude 
 \[
{\cedit S^tv \le  \ee  \2\pbb{\frac{\tau}{\rho(S)}}^{-K} \tau v\,. }
 \]
 Together with $\tau \1v \le S^tv$ and the irreducibility of $S$ we get the polynomial upper and lower bound on $v$ in  \eqref{power law bound}. Since $\avg{v} = \avg{w}$ this also implies an upper bound on $w$ of the form $w \lesssim \tau^{-C}$. With \eqref{Dyson equation-version 2} we also get $w \gtrsim \tau^{C}Sw$ by potentially increasing the value of the constant $C>0$. Since $S$ is irreducible this implies the polynomial  bounds on $w$ in  \eqref{power law bound}. 
 
 Now we show \eqref{v average scaling}. From \eqref{upper bound on Jv} and the definition of $J$ we see that 
\bels{log permutation bound}{
- \avgB{\log \frac{ \wt{v}}{\wt{S}^t \wt{v}}}=-\avg{\log \wt{v}} + \avgb{\log \wt{S}^t \wt{v}}=- \avgB{\log \frac{ v}{S^t v}} \le \tau\,,
}
by permutation of the indices. Since $\wt{S}$ has positive entries along its diagonal, we have $\wt{S}^t \wt{v} \gtrsim \wt{v}$. \cedit Here we used that $S$ has support.\nc With \eqref{log permutation bound} this also implies the bound 
$\frac{ \wt{v}}{\wt{S}^t \wt{v}} \gtrsim 1 $. By defining a functional 
\[
I: \bigg\{{\cedit y} \in \R^K : 0< y < \frac{1}{\tau} S y\bigg\} \to [1, \infty)\,, \qquad 
 I(y):= \avgB{\frac{\tau y}{S y}} - \avgB{\log \frac{\tau y}{S y}}
\]
analogous to $J$ from \eqref{def of J} and exchanging the roles of $v$ and $w$, as well as $S$ and $S^t$, we see that a variational principle for $w$ holds analogously to Proposition~\ref{prp:variational problem for v}. Following the same reasoning as we did for $v$ then yields that
\bels{wtS w sim w}{
\wt{S}^t \wt{v} \sim \wt{v} \,, \qquad \wt{S} \wt{w} \sim \wt{w}
}
also holds for $w$. We multiply the first equation in \eqref{tilde equations} with $\wt{v}$ to find 
\bels{Permuted equation with 1}{
1= \wt{v}\wt{S}\wt{w} + \frac{\tau\wt{v}\wt{w}}{\wt{w}Q\wt{S}^t\wt{v}} \sim \wt{v}\wt{w} + \frac{\tau\wt{v}\wt{w}}{\wt{w}Q\wt{v}}\,.
}
In particular, we see the upper bound  $\wt{v}\wt{w} \lesssim 1$. By \eqref{vw symmetry with P} we have $\wt{v}\wt{w} \sim Q(\wt{v}\wt{w})$. Applying the permutation $Q$ iteratively reveals that $(\wt{v}\wt{w})_i \sim(\wt{v}\wt{w})_j $ for all $i,j \in \llbracket K \rrbracket$
 that are elements of the same cycle of $Q$. 
Now we show the lower bound $\wt{v}\wt{w} \gtrsim 1$. Suppose that this bound fails, i.e. that $(\wt{v}\wt{w})_i\ll 1$ for some index $i\in \llbracket K \rrbracket$. Then the same behavior holds for all indices $i$ in the same cycle of $Q$. We call these indices $I_Q \subset \llbracket K \rrbracket$ and write $x_{I_Q} =(x_i)_{i \in I_Q}$ for the restriction of $x \in \C^K$ to these indices. 
By \eqref{Permuted equation with 1} and $\wt{v}\wt{w} \sim Q(\wt{v}\wt{w})$ we get 
\[
1\sim \wt{v}\wt{w} + \frac{\tau\wt{v}Q\wt{w}}{\wt{v}\wt{w}}\,.
\]
Since we assumed that $(\wt{v}\wt{w})_{I_Q} \ll 1$, we must have $(\wt{v}\wt{w})_{I_Q} \sim \tau(\wt{v}Q\wt{w})_{I_Q}$,  i.e. $\wt{w}_{I_Q} \sim \tau (Q\wt{w})_{I_Q}$.  Now we iterate  this relation which leads to a contradiction, since for some $n \in \N$ we have $Q^n = 1$ and would thus conclude  $\wt{w}_{I_Q} \sim \tau^n \wt{w}_{I_Q}$, which is not true for $\tau \ll 1$. We infer $ \wt{v}\wt{w} \sim 1$. 

Recalling that $\wt S_{kk}$  with $k \in \llbracket L \rrbracket$ in the block structure \eqref{def wt S} of $\wt{S}$ are primitive we find $n \in \N$ such that $\wt S_{kk}^n$ has strictly positive entries and get
\[
\wt{w}_k \sim (\wt{S}^n\wt{w})_k  \gtrsim \wt S_{kk}^n\wt{w}  \sim \avg{\wt{w}_k}  
\]
due to \eqref{wtS w sim w}. In combination with the trivial upper bound $\wt{w}_k \lesssim \avg{\wt{w}_k}$, we conclude $\wt{w}_k \sim \avg{\wt{w}_k}$. For $v$ we prove $\wt{v}_k \sim \avg{\wt{v}_k}$ analogously. Together with $\wt{v} \wt{w} \sim 1$ \eqref{v average scaling} follows. By definition of the relation $\LHD$ in \eqref{def of orders} and \eqref{wtS w sim w} the monotonicity \eqref{monotonicity for v} follows. 
\end{proof}

 We now use Lemma \ref{lmm:complementary scaling} to show that $\langle \wt v_k \rangle$ satisfies a min-max averaging type equation, given in \eqref{comparison relation vi}, on a graph induced by the edges $\cal{E}^\prec$ and $\cal{E}^\LHD$, defined in \eqref{def of orders}. From this min-max averaging type equation we will determine the degree of the blow-up of $\sigma$. 

We write \eqref{tilde equations} in the form 
\bels{v w equations with 1 on the lhs}{
1=\wt{v}_k\sum_{l=1}^L \wt S_{kl} \wt{w}_l + a_k \wt{v}_k\,, \qquad 1=\wt{w}_k\sum_{l=1}^L \wt S_{kl}^t \wt{v}_l + b_k \wt{w}_k\,, \qquad k\in \llbracket L \rrbracket\,,
}
where we introduced the short hand $a=(a_1, \dots, a_L)$ and $b=(b_1, \dots, b_L)$ for  
\[
a := Q\frac{\tau}{\wt{S}^t\wt{v}}\,, \qquad b:= Q^t \frac{\tau}{\wt{S}\wt{w}}\,.
\]
We equate the two right hand sides in \eqref{v w equations with 1 on the lhs},  average them {\cedit within each block} and {\cedit then for each $k \in \llbracket L\rrbracket$,} cancel the term $\avg{\wt{v}_k\wt S_{kk} \wt{w}_k}$ to get 
\[
\sum_{l: l \ne k} \avg{\wt{w}_k\wt S_{lk}^t \wt{v}_l} + \avg{b_k\wt{w}_k} = \sum_{l: l \ne k} \avg{\wt{v}_k\wt S_{kl} \wt{w}_l} + \avg{a_k \wt{v}_k} \,.
\]
By definition of the relation $\LHD$ and since $\wt{v}_k \sim \avg{\wt{v}_k}$, $\wt{w}_k \sim \avg{\wt{w}_k}$  by \eqref{v average scaling} this implies 
\[
\sum_{l: l \LHD k} \avg{\wt w_k}\avg{\wt v_l} + \avg{b_k} \avg{ \wt w_k} \sim  \sum_{l:k \LHD l} \avg{\wt v_k}\avg{\wt w_l} + \avg{a_k}\avg{\wt v_k} \,.
\]
With $\avg{\wt w_k}\avg{\wt v_k} \sim 1$ from  \eqref{v average scaling} we find 
\bels{comparison relation vi}{
 \avg{\wt v_k}^2  \sim \frac{\sum_{l: l \LHD k} \avg{ \wt v_l} + \avg{b_k} }{{\sum_{l:k \LHD l} \frac{1}{\avg{\wt  v_l}} + \avg{a_k}}}
 \sim \max\cB{\max_{l \LHD k}\avg{\wt v_l}, \avg{b_k}}\min \cbb{\min_{k \LHD l} \avg{\wt v_l}, \frac{1}{\avg{a_k}}}\,.
}
Now we determine the size of $a$ and $b$. With \eqref{monotonicity for v} and \eqref{v average scaling} we have 
\[
(Qb)_k =  \frac{\tau}{\sum_{l}\wt S_{kl}\wt w_l } \sim  \tau\1 \avg{\wt v_k}\,, \qquad (Q^ta)_k = \frac{\tau}{\sum_l \wt S^t_{lk}\wt v_l} \sim \frac{\tau}{\avg{\wt v_k}}
\]
With the definition of the relation $\prec$ from \eqref{def of orders} we conclude
\bels{scaling for a and b}{
\avg{b_k} \sim \tau\max_{l \prec k} \avg{\wt v_l}\,, \qquad \frac{1}{\avg{a_k}} \sim \frac{1}{\tau}\min_{k \prec l}\avg{\wt v_l}\,.
}

Now we define 
\[
f_k(\tau) : = - \frac{\log \avg{\wt v_k(\tau)}}{\log \tau}\,.
\]
Thus,  by definition $\avg{\wt v_k} = \tau^{-f_k}$ and \eqref{power law bound} implies $\abs{f_k } \lesssim 1$. 
Now let $\tau_n \downarrow 0$ be a sequence such that $\wh{f}:= \lim_{n \to \infty}(f_k(\tau_n))_{k=1}^L$ exist. From \eqref{monotonicity for v} we get the following corollary.

\begin{corollary} \label{lmm:monotonicity}
For all $k,l \in \llbracket L \rrbracket$ the relation $k \LHD l$ implies $\wh{f}_k \le \wh{f}_l$. 
\end{corollary}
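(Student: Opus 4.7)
The plan is to derive the claim directly from the polynomial comparison \eqref{monotonicity for v} established in Lemma~\ref{lmm:complementary scaling} by taking logarithms and passing to the limit $\tau_n \downarrow 0$. Since the statement concerns only the exponent of the power-law behaviour of $\avg{\wt v_k(\tau)}$ as $\tau \downarrow 0$, no new estimate beyond \eqref{monotonicity for v} is required; the work is purely in translating a multiplicative inequality into an inequality between exponents.

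Concretely, fix $k,l \in \llbracket L \rrbracket$ with $k \LHD l$. By \eqref{monotonicity for v} there is a constant $C>0$, independent of $\tau$, such that $\avg{\wt v_k(\tau)} \le C \avg{\wt v_l(\tau)}$ for all $\tau \in (0, \rho(S)/2]$. Substituting the definition $\avg{\wt v_k(\tau)} = \tau^{-f_k(\tau)}$ and taking logarithms yields
\[
-f_k(\tau)\log \tau \;\le\; \log C - f_l(\tau)\log \tau,
\]
i.e. $\bigl(f_l(\tau) - f_k(\tau)\bigr)\log \tau \le \log C$. For $\tau$ sufficiently small we have $\log \tau < 0$, so dividing flips the inequality:
\[
f_l(\tau) - f_k(\tau) \;\ge\; \frac{\log C}{\log \tau}.
\]
Evaluating along the sequence $\tau_n \downarrow 0$ and using that $(f_k(\tau_n), f_l(\tau_n))$ converges to $(\wh f_k, \wh f_l)$ while $\log C/\log \tau_n \to 0$, the right-hand side vanishes in the limit, giving $\wh f_l - \wh f_k \ge 0$.

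There is no genuine obstacle: the polynomial two-sided bound $\tau^C \lesssim \wt v \lesssim \tau^{-C}$ from \eqref{power law bound} already guarantees that the exponents $f_k(\tau)$ are uniformly bounded, so the accumulation point $\wh f$ exists and is finite, and the argument above shows that the qualitative ordering encoded by $\LHD$ is inherited by the limiting exponents. The statement is therefore essentially a direct corollary of Lemma~\ref{lmm:complementary scaling} combined with the elementary observation that the map $x \mapsto -\log x/\log \tau$ is order-preserving on positive reals whenever $\tau \in (0,1)$.
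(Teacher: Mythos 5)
Your proposal is correct and follows essentially the same route as the paper, which simply asserts that the corollary is a direct consequence of the monotonicity bound \eqref{monotonicity for v}; you have merely spelled out the elementary step of writing $\avg{\wt v_k(\tau)} \le C\avg{\wt v_l(\tau)}$ as $(f_l(\tau)-f_k(\tau))\log\tau \le \log C$, dividing by the negative quantity $\log\tau$, and passing to the limit along $\tau_n\downarrow 0$ where the constant term vanishes.
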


Next we derive an equation for $\wh{f}$. Taking the negative logarithm of \eqref{comparison relation vi} plugged into \eqref{scaling for a and b}, dividing by $\log \tau$ and then taking the limit along the sequence $\tau_n$ yields 
\bels{non-herm min max averaging}{
\wh{f}_k =\frac{1}{2}\pbb{
\max\cbb{\max_{l \LHD k}{\wh{f}_l} ,  \max_{l\prec k}{\wh{f}_l}-1}+ \min \cbb{ \min_{k \LHD l}{\wh{f}_l},\min_{k \prec l}{\wh{f}_l}+1}} \,, \qquad k \in \llbracket L \rrbracket\,.
}

This min-max equation \eqref{non-herm min max averaging} is similar to the one studied in \cite{Kruegersingularity}. The main difference between the two setups is the presence of terms involving the $\prec$-relation and the lack of a boundary condition for \eqref{non-herm min max averaging}. Due to the  lack of  boundary terms that were present in \cite{Kruegersingularity} the solution to \eqref{non-herm min max averaging} can only be determined up to an additive constant for all the entries. Even with the additional condition $\langle v \rangle = \langle w \rangle$, which, from the relation $\wt v_k \sim \frac{1}{\wt w_k}$, implies $\max_{k}\wh{f}_k = - \min_{k}\wh{f}_k$ equation \eqref{non-herm min max averaging} does not in general have a unique solution. Fortunately, in order to determine the growth rate of the singularity in the density $\sigma$,  we only need to determine the smallest difference $\wh f_k - \wh f_l$ for $l \LHD k$.

For an edge in $e = (l,k) \in \cal{E}$, we define  its weight  to be
\bels{def:weights}{w_e :=  \begin{cases} \wh f_k - \wh f_l &\text{ if } e \in \cal{E}^{\LHD} \\
                        \wh f_k - \wh f_l +1 &\text{ if } e \in \cal{E}^{\prec}  \setminus \cal{E}^{\LHD}  
                        \end{cases} }

\begin{lemma} \label{lem:smaller neighbor}
For every edge  $ (l,k) \in \cal{E}$, there exists an edge $(k,p)\in \cal{E}$ such that $w_{ (l,k)} \geq w_{(k,p)}$.
\end{lemma}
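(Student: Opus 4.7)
The plan is to exploit the min–max averaging identity \eqref{non-herm min max averaging} directly. Introduce the abbreviations
\[
M_k := \max\cB{\max_{l \LHD k}\wh{f}_l,\; \max_{l\prec k}\wh{f}_l-1}, \qquad
m_k := \min\cB{\min_{k \LHD l}\wh{f}_l,\; \min_{k \prec l}\wh{f}_l+1},
\]
so that \eqref{non-herm min max averaging} reads $\wh{f}_k = \tfrac{1}{2}(M_k+m_k)$. The key consequence is the equality of offsets
\[
\wh{f}_k - M_k \;=\; m_k - \wh{f}_k \;=\; \tfrac{1}{2}(m_k - M_k),
\]
which will provide the bridge between incoming and outgoing edges.

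The first step is to show the uniform lower bound $w_{(l,k)} \geq \wh{f}_k - M_k$ for every incoming edge $(l,k)\in\cal{E}$. I would split by case according to \eqref{def:weights}: if $(l,k)\in\cal{E}^{\LHD}$, then $\wh{f}_l$ is one of the terms in the outer max defining $M_k$, so $\wh{f}_l \leq M_k$ and $w_{(l,k)} = \wh{f}_k - \wh{f}_l \geq \wh{f}_k - M_k$; if $(l,k)\in\cal{E}^{\prec}\setminus\cal{E}^{\LHD}$, then $\wh{f}_l - 1 \leq M_k$, giving $w_{(l,k)} = \wh{f}_k - \wh{f}_l + 1 \geq \wh{f}_k - M_k$.

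The second step is to produce an outgoing edge $(k,p)\in\cal{E}$ that attains the dual bound $w_{(k,p)} = m_k - \wh{f}_k$. Here I use that the minimum in the definition of $m_k$ is achieved: if it is achieved by $\min_{k\LHD l}\wh{f}_l$ at some $p$, the edge $(k,p)\in\cal{E}^{\LHD}$ yields $w_{(k,p)} = \wh{f}_p-\wh{f}_k = m_k - \wh{f}_k$; if it is achieved (strictly) by $\min_{k\prec l}\wh{f}_l + 1$ at some $p$, then $p$ cannot also satisfy $k\LHD p$ (else $\wh{f}_p$ would give a smaller value in the first min, contradicting that the second min realises the overall minimum), so $(k,p)\in\cal{E}^{\prec}\setminus\cal{E}^{\LHD}$ and $w_{(k,p)} = \wh{f}_p-\wh{f}_k+1 = m_k-\wh{f}_k$; the boundary case where both minima agree is handled by either choice.

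Combining the two steps with the identity $\wh{f}_k - M_k = m_k - \wh{f}_k$ delivers $w_{(l,k)} \geq \wh{f}_k - M_k = m_k - \wh{f}_k = w_{(k,p)}$, which is the claim. I do not anticipate a real obstacle: the definition of the weights in \eqref{def:weights} is precisely tailored so that the arguments of the outer $\max$ and $\min$ in \eqref{non-herm min max averaging} coincide with $\wh{f}_l$ minus the shift used in weighting incoming edges, and with $\wh{f}_p$ plus the shift used in weighting outgoing edges, respectively; the only point that requires a little care is verifying that the minimiser in the $\prec$-branch of $m_k$ is not simultaneously a $\LHD$-neighbour, which is immediate from minimality.
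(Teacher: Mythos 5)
Your proof is correct and follows exactly the paper's approach: rewrite the min--max identity \eqref{non-herm min max averaging} as $\wh{f}_k - M_k = m_k - \wh{f}_k$, bound every incoming weight $w_{(l,k)}$ from below by the left side, and exhibit an outgoing edge attaining the right side. You spell out the case analysis (in particular, that a $\prec$-minimizer of $m_k$ cannot also be a $\LHD$-neighbour) which the paper leaves implicit, but the argument is the same.
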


\begin{proof}
Multiplying the min-max averaging property, \eqref{non-herm min max averaging}, by $2$ and rearranging gives:
\bels{f equation rewritten}{ \wh f_k -   \max\cbb{\max_{q \LHD k}{\wh{f}_q} ,  \max_{q \prec k}{\wh{f}_q}-1}  = \min \cbb{ \min_{k \LHD p}{\wh{f}_p},\min_{k \prec p}{\wh{f}_p}+1} - \wh f_k \,. }
For all $ (l,k) \in \cal{E}$, we have that $w_{(l,k)} \geq \wh f_k -   \max\cb{\max_{q \LHD k}{\wh{f}_q} ,  \max_{q \prec k}{\wh{f}_q} -1} $, where the lower bound is the left hand side in \eqref{f equation rewritten}. Now we pick a minimizing $p$ on the right hand side of \eqref{f equation rewritten}. Then $(k,p) \in \cal{E}$ is the edge from the statement of the lemma.
\end{proof}

\begin{proposition}\label{prp:kappa = delta}
Let $\kappa:= \min_{\gamma} \frac{\avg{\gamma}}{\abs{\gamma}}$
with the minimum  taken over all cycles $\gamma$ and $\delta:=\min\{\wh{f}_k-\wh{f}_l: l \LHD k\}$, as well as $\wh \delta:= \min\{w_{{e}}: {e} \in \cal{E}\}$.  Then $\kappa=\delta=\wh{\delta}$.
\end{proposition}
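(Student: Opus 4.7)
The plan is to use Lemma~\ref{lem:smaller neighbor} to chase along edges of weakly decreasing weight until they close into a cycle, and to convert the resulting weight information into the ratios $\avg{\gamma}/\abs{\gamma}$ via the telescoping of the $\wh f$-differences around the cycle.

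First I would record the basic identity $\sum_{i=1}^{\abs{\gamma}} w_{e_i} = \avg{\gamma}$ for every cycle $\gamma = e_1 \dots e_{\abs{\gamma}}$, which comes from the telescoping of the differences $\wh f_{k_i}-\wh f_{k_{i-1}}$ together with the bare $+1$ contributed by each $\prec$-edge. This already yields $\kappa \ge \wh\delta$, since $w_{e_i} \ge \wh\delta$ implies $\avg{\gamma} \ge \abs{\gamma}\,\wh\delta$. For the opposite inequality I would pick $e_0 \in \cal{E}$ with $w_{e_0}=\wh\delta$ and iterate Lemma~\ref{lem:smaller neighbor} to produce $e_0, e_1, e_2, \dots$ with $w_{e_j} \le w_{e_{j-1}}$; since each $w_{e_j} \ge \wh\delta$ by definition, all weights along the chase equal $\wh\delta$ exactly, and by finiteness of the edge set the sequence is eventually periodic, extracting a cycle $\gamma_\ast$ all of whose edges carry weight $\wh\delta$. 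Then $\avg{\gamma_\ast}/\abs{\gamma_\ast}=\wh\delta$, so $\kappa \le \wh\delta$, giving $\kappa=\wh\delta$.

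To upgrade this to $\delta=\wh\delta$ (where $\wh\delta\le\delta$ is immediate), I would observe that $\avg{\gamma_\ast}/\abs{\gamma_\ast}=\wh\delta$ says the proportion of $\prec$-edges in $\gamma_\ast$ is exactly $\wh\delta$. Provided $\wh\delta<1$, $\gamma_\ast$ must contain at least one $\LHD$-edge $e^\ast$, and $w_{e^\ast}=\wh\delta$ forces $\delta\le\wh\delta$. The strict bound $\wh\delta<1$ I would obtain from the strong connectivity of $(\llbracket L \rrbracket, \cal{E}^\LHD\cup\cal{E}^\prec)$ established earlier: any $\LHD$-edge $(l,k)$ closes into a cycle via a path from $k$ back to $l$, and this cycle necessarily contains a $\LHD$-edge, so its $\prec$-fraction is strictly less than $1$, whence $\kappa<1$.

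The main bookkeeping subtlety I anticipate concerns edges that belong to both $\cal{E}^\LHD$ and $\cal{E}^\prec$. To keep the identity $\sum w_e = \avg{\gamma}$ valid and to apply Lemma~\ref{lem:smaller neighbor} with a well-defined successor at each step of the chase, I would treat the edges as labeled, so that each element of $\cal{E}^\LHD\cap\cal{E}^\prec$ gives rise to two separately-weighted copies. This is consistent with the proof of Lemma~\ref{lem:smaller neighbor}, where the successor is selected as an argmin on the right-hand side of~\eqref{f equation rewritten} and thereby naturally carries a definite $\LHD$ or $\prec$ label.
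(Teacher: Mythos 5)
Your proof follows the same route as the paper's: telescoping gives $\sum_{e\in\gamma}w_e=\avg{\gamma}$, the weight-chasing via Lemma~\ref{lem:smaller neighbor} produces a cycle of constant minimal weight $\wh\delta$ yielding $\kappa=\wh\delta$, and one then locates a $\LHD$-edge on that cycle to conclude $\delta=\wh\delta$. Your two added clarifications are both sound and in fact fill small gaps the paper leaves implicit: the paper's proof itself reinterprets $\avg{\gamma}$ as the count of edges in $\cal{E}^\prec\setminus\cal{E}^\LHD$ (so your labelled-edge convention is exactly what makes the weight identity and the chase well-posed on $\cal{E}^\LHD\cap\cal{E}^\prec$), and the paper's step from ``$\cal{E}^\LHD\ne\emptyset$'' to ``every minimizing cycle contains a $\LHD$-edge'' tacitly uses the strong connectivity of $(\llbracket L\rrbracket,\cal{E}^\LHD\cup\cal{E}^\prec)$ established earlier, which you spell out as $\kappa<1$.
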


\begin{proof}
For $K=1$ there is nothing to show. Therefore, let $K\ge 2$.       
On a cycle $\gamma = e_1 e_2 \dots e_\ell$ passing through edges $e_l =(k_{l-1},k_{l}) \in \cal{E}$ with $k_0=k_\ell$  we  have that the sum of the weights is
\begin{equation} \label{eq:sum weights}
\sum_{e\in \gamma} w_e = \langle \gamma \rangle,
\end{equation}
where we recall that $\langle \gamma \rangle$ is the number of edges in  $\cal{E}^{\prec}  \setminus \cal{E}^{\LHD}$ that $\gamma$ traverses. Thus, the minimizing $\gamma$ in the definition of  $\kappa$ has the smallest average weight. Now let $\wh \delta= \min\{w_{{e}}: {e} \in \cal{E}\}$ as in the statement of the proposition. Then there is a cycle $\gamma$ along which the weights are constant and equal to $\wh{\delta}$. Indeed, by Lemma~\ref{lem:smaller neighbor} we can iteratively construct a path $\wh \gamma =  e_1 e_2 \dots  e_k$ starting at some $ e_1$ with $w_{e_1} = \wh{\delta}$ such that $w_{ e_l} \ge w_{ e_{l+1}}$. Since $ e_1$ already has minimal weight, the weights are constant along this path. Since the graph is finite this path will contain a cycle $ \gamma$ if it is extended sufficiently far.  This cycle $ \gamma$ has constant weights along its edges equal to $\wh{\delta}$. In particular, it is a minimizer for $\kappa$ and $\kappa = \wh{\delta}$. This also implies that every minimizing cycle for $\kappa$ must have constant weights $\wh{\delta}$ along its edges. It remains to show that $\delta=\wh{\delta}$.
This holds because along a minimizing cycle $\gamma$ for $\kappa$ not all edges are in $\cal{E}^\prec \setminus \cal{E}^\LHD$. Indeed, since $S$ is irreducible  the set of edges $\cal{E}^\LHD$ is not empty. Thus, any minimizing cycle for $\kappa$ must contain at least one such edge $(l,k) \in \cal{E}^\LHD$. The weight  of this edge is $\wh{\delta} = w_{(l,k)} = \wh{f}_k- \wh{f}_l$.
\end{proof}

\begin{corollary}\label{crl:scaling problem}
In the limit $\tau\downarrow 0$, the pair $(v,w)$ satisfies the scaling problem 
\[
\lim_{\tau \downarrow 0}\frac{\log \avg{1-v Sw}}{\log \tau}  = \kappa\,. 
\]
\end{corollary}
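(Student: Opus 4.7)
The plan is to reduce $\avg{1-vSw}$ to an exponent computation on the weighted graph from Proposition~\ref{prp:kappa = delta} and then carry out a combinatorial identification. Multiplying the first equation of \eqref{Dyson equation} by $v$ gives $1 - vSw = \tau v/(S^tv)$, so
\[
\avg{1-vSw} \;=\; \tau\,\avg{v/(S^tv)},
\]
and it suffices to show $\log\avg{v/(S^tv)}/\log\tau \to \kappa - 1$ as $\tau\downarrow 0$.

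Next I pass to the permuted variables. Using $\wt S = Q_1SQ_2^t$ one checks $Q_1Sw = \wt S\wt w$ and $Q_1S^tv = Q\wt S^t\wt v$, so applying $Q_1$ componentwise and averaging (which is permutation-invariant) gives $\avg{v/(S^tv)} = \avg{\wt v/(Q\wt S^t\wt v)}$, and by the entrywise comparison $\wt S^t\wt v \sim \wt v$ from \eqref{wtS w sim w} this is $\sim\avg{\wt v/(Q\wt v)}$. Writing $Q$ as a permutation $\sigma$ of $\llbracket K\rrbracket$ and grouping each index $i\in\llbracket K\rrbracket$ by the row-block $l$ containing $i$ and the column-block $k$ containing $\sigma(i)$, and invoking $\wt v_i \sim \avg{\wt v_l} = \tau^{-f_l(\tau)}$ from \eqref{v average scaling}, I obtain
\[
\avg{\wt v/(Q\wt v)} \;\sim\; \sum_{(l,k)\in\cal{E}^\prec} n_{lk}\, \tau^{f_k(\tau)-f_l(\tau)},
\]
where $n_{lk}$ denotes the number of nonzero entries of the block $Q_{lk}$, and $n_{lk}>0$ precisely when $l\prec k$.

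To extract the $\tau\downarrow 0$ asymptotics I use a subsequence argument. Since $f(\tau)$ is uniformly bounded by Lemma~\ref{lmm:complementary scaling}, along any sequence $\tau_n\downarrow 0$ one can pass to a subsequence with $f(\tau_n)\to\wh f$, along which the log-asymptotic of the sum converges to $\min_{(l,k)\in\cal{E}^\prec}(\wh f_k-\wh f_l)$. The weight definition \eqref{def:weights} and Proposition~\ref{prp:kappa = delta} imply $\wh f_k - \wh f_l \ge w_{(l,k)} - 1 \ge \wh\delta - 1 = \kappa - 1$ on every $\prec$-edge. Conversely, the proof of Proposition~\ref{prp:kappa = delta} constructs a minimizing cycle of constant edge weight $\wh\delta = \kappa$, and since $\kappa > 0$ forces $\avg{\gamma}\ge 1$ on every cycle, this minimizing cycle must include at least one edge in $\cal{E}^\prec\setminus\cal{E}^\LHD$, at which $\wh f_k - \wh f_l = w_{(l,k)} - 1 = \kappa - 1$. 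Hence the minimum equals $\kappa - 1$ independently of the subsequence, and combining with the prefactor $\tau$ from the first step gives $\log\avg{1-vSw}/\log\tau \to \kappa$.

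The main obstacle is this final combinatorial step: identifying that the minimum of $\wh f_k - \wh f_l$ over $\cal{E}^\prec$ equals $\kappa - 1$ requires both the uniform lower bound $w_e \ge \kappa$ on edge weights and the structural fact that any minimizing cycle for $\kappa > 0$ must traverse a $\prec\setminus\LHD$ edge, at which the $+1$ offset in the weight formula is realized as the sought exponent $\kappa - 1$. The bookkeeping distinguishing $\cal{E}^\prec\setminus\cal{E}^\LHD$ edges (with exponent $w_e - 1$) from $\cal{E}^\prec\cap\cal{E}^\LHD$ edges (with exponent $w_e$) is the essential place where the asymmetry between the $\prec$- and $\LHD$-relations enters.
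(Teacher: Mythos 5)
Your proof is correct and follows essentially the same route as the paper: reduce via $1-vSw = \tau v/(S^tv)$, permute and use $\wt{S}^t\wt{v}\sim\wt{v}$ to arrive at $\tau\,\wt{v}/(Q\wt{v})$, extract the subsequential exponent $1+\min_{l\prec k}(\wh{f}_k-\wh{f}_l)$, and identify this with $\kappa$ via Proposition~\ref{prp:kappa = delta}. The only cosmetic difference is that you justify $\avg{\gamma}\ge 1$ on every cycle by invoking $\kappa>0$, whereas the paper points directly to the underlying structural reason (that $\cal{E}^\LHD$ is acyclic because of the block upper-triangular form \eqref{def wt S}); the two are equivalent and both valid.
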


\begin{proof}
By \eqref{vw symmetry} and \eqref{vw equation with 1 on lhs} we have 
\[
Q_1(1-vSw)=Q_1\frac{\tau  v}{ S^tv} = \frac{\tau \wt{v}}{ Q\wt{S}^t\wt{v}} \sim \frac{\tau \wt{v}}{ Q\wt{v}},
\]
where we used \eqref{wtS w sim w}. By the definition of the $\prec$-relation from \eqref{def of orders} along the subsequence $\tau_n \downarrow 0$ with $v_n:=v(\tau_n)$ and $w_n:=w(\tau_n)$ we get 
\[
\lim_{n \to \infty}\frac{\log \avg{1-v_n Sw_n}}{\log \tau_n} = \min_{l \in \llbracket L \rrbracket}\pb{1-\wh{f}_l+\min_{k: l \prec k}\wh{f}_k} =\kappa\,,
\] 
where for the last equality we used the definition of the weights in \eqref{def:weights} and that $\wh{\delta}= \kappa$ by Proposition~\ref{prp:kappa = delta}. Furthermore, we used that the minimizing cycle $\gamma$ from the proof of  Proposition~\ref{prp:kappa = delta}, for which the weights are constant and equal to $\kappa$ along $\gamma$, contains at least one edge in $\cal{E}^\prec \setminus \cal{E}^\LHD$, since $\cal{E}^\LHD$ does not contain cycles. 
\end{proof}

\cedit{ We are now ready to prove our main theorem. We will use the representation of the Brown measure, \eqref{sigma through Fr} below, from \cite{AK_Brown}. The majority of the proof is devoted to showing that $\sigma(\tau) \sim \frac{1}{\pi} \langle vw \rangle$, for $\tau$ small, then the main result follows from the relationship $
\tau v w = \frac{1-vSw}{vSw}
$
due to \eqref{vw symmetry} and \eqref{vw equation with 1 on lhs}, combined with Corollary \ref{crl:scaling problem}.  }

\begin{proof}[Proof of Theorem~\ref{thr:Singularity degree}]
We use the identity  \cite[Equation (6.17)]{AK_Brown} to  express the Brown measure at \cedit sufficiently small \nc $\tau >0$  as the quadratic form
\cedit 
\bels{sigma through Fr}{
\sigma
= \frac{1}{\pi}\scalarbb{ \sqrt{v w}}{\frac{1}{1-\sqrt{1-D\p{\hat{v}}^2}{K}_r\sqrt{1-D\p{\hat{v}}^2}} \sqrt{v w}}
}
where  the symmetric  matrices $F: \C^{2K} \to \C^{2K}$ and \nc ${K}_r: \C^K \to \C^K$ are defined as 
\cedit \bes{
K_r := P\frac{2F}{1+F} P^*\,,\qquad 
F := \begin{pmatrix}
0 & D\p{\frac{v}{\hat{v}}}SD\p{\frac{w}{\hat{v}}}
\\
D\p{\frac{w}{\hat{v}}}S^tD\p{\frac{v}{\hat{v}}} & 0
\end{pmatrix}
\,,
} 
$D(u)\in \C^{K\times K} $ is a diagonal matrix with $u$ along its main diagonal, $P: \C^{2K}\to \C^{K}$ is the projection $P(x,y):=\frac{1}{2}(x+y)$,  $P^*: \C^{K}\to \C^{2K}$ the embedding $P^*x:=(x,x)$ and  $\wh{v}:=\sqrt{v Sw}$. 

We note that the derivation of the formula \cite[Equation (6.17)]{AK_Brown} is purely algebraic. It does not rely on the Assumption {\bf A1} made in \cite{AK_Brown}.
Furthermore, in  \eqref{sigma through Fr} we took the limit $\eta \downarrow 0$, which is still explicit in \cite[Equation (6.17)]{AK_Brown}. This is possible here, since, as we will soon show, $K_r$ is well defined  and $\spec(K_r) \subset (-\infty,1]$. Furthermore, $\wh{v}$ is strictly positive at sufficiently small $\tau$.  To see that $K_r$ is well defined, i.e. that $(1+F)^{-1}$ is bounded on the image of $P^*$,   we first note that  $F(\wh{v},\wh{v})=(\wh{v},\wh{v})$ and $F(\wh{v},-\wh{v})=(-\wh{v},\wh{v})$. Since $S$ is irreducible, so is $F$. Therefore, by the Perron-Frobenius theorem, $F$ has non-degenerate eigenvalues $1$ and $-1$ and all other eigenvalues lie in the open interval $(-1,1)$. Since the eigenvector  $(\wh{v},-\wh{v})$, corresponding to the eigenvalue $-1$, is orthogonal to the image of $P^*$, the matrix $K_r$ is well defined. The inclusion $\spec(K_r) \subset (-\infty,1]$ follows because $\norm{F}\le 1$. 

Now we show that along any sequence $\tau_n \downarrow 0$ the norm $\norm{K_r}$ is bounded, i.e. we have $\limsup_{n \to \infty}\norm{K_r}\big|_{\tau=\tau_n}< \infty$. To see this we write $F$
\nc 
  in terms of $\wt{v}$ and $\wt{w}$ through
\bels{Fr in terms of wilde v and tilde w}{
D\pB{\frac{v}{\hat{v}}}SD\pB{\frac{w}{\hat{v}}} = Q_1^tD\pB{\frac{\wt{v}}{\wt{S}\wt{w}} }^{1/2}\wt{S}D\pB{\frac{\wt{w}}{\wt{S}^t\wt{v}}}^{1/2}Q_2\,.
}
Since $\frac{\wt{v}}{\wt{S}\wt{w}} \sim \wt{v}^2$ and $\frac{\wt{w}}{\wt{S}^t\wt{v}} \sim \wt{w}^2 \sim \frac{1}{\wt{v}^2}$ by Lemma~\ref{lmm:complementary scaling} we see that 
\[
\pB{D\pB{\frac{\wt{v}}{\wt{S}\wt{w}} }^{1/2}\wt{S}D\pB{\frac{\wt{w}}{\wt{S}^t\wt{v}}}^{1/2} x}_k =  \sqrt{\frac{\wt{v}_k}{\wt{S}_{kk}\wt{w}_k(1+ q_k)}}\wt{S}_{kk}\pbb{\sqrt{\frac{\wt{w}_k}{\wt{S}^t_{kk}\wt{v}_k(1 + p_k)}}x_k} + R_kx
\]
for $k\in \llbracket L \rrbracket$, 
where $q_k, p_k \in \R^{d_k}$ and  $R_k: \C^K \to \R^{d_k}$ with $\wt{S}_{kk} \in \R^{d_k \times d_k}$ satisfy 
\[
\norm{q_k} \lesssim \max_{l: k \LHD l}\frac{\avg{\wt{w}_l}}{\avg{\wt{w}_k}} \sim \max_{l: k \LHD l}\frac{\avg{\wt{v}_k}}{\avg{\wt{v}_l}}\,, \quad \norm{p_k} \lesssim \max_{l: l \LHD k}\frac{\avg{\wt{v}_l}}{\avg{\wt{v}_k}}\,,\quad 
\norm{R_k} \lesssim \max_{l: k\LHD l} \sqrt{\frac{\avg{\wt{v}_k}\avg{\wt{w}_l}}{\avg{\wt{w}_k}\avg{\wt{v}_l}}} \sim \max_{l: k\LHD l} \frac{\avg{\wt{v}_k}}{\avg{\wt{v}_l}}\,.
\]
Thus, $\norm{q_k}+ \norm{p_k} +\norm{R_k} \to 0$ for $\tau\downarrow 0$. 
\cedit 
We conclude that 
\bels{F tau limit}{
F -  
\begin{pmatrix}
 Q_1^t  & 0
\\
0& Q_2^t
\end{pmatrix}
\begin{pmatrix}
0 &  \wt F 
\\
 \wt F^t  & 0 
\end{pmatrix}
\begin{pmatrix}
 Q_1  & 0
\\
0& Q_2
\end{pmatrix} \to 0
}
\nc
for $\tau\downarrow 0$, where 
\[
(\wt{F}x)_k =  \wt{F}_kx_k\,, \qquad \wt{F}_k=D\pB{ \frac{\wt{v}_k}{\wt{S}_{kk}\wt{w}_k}}\wt{S}_{kk}\pB{\frac{\wt{w}_k}{\wt{S}^t_{kk}\wt{v}_k}} \in \R^{d_k \times d_k}.
\]
By Lemma~\ref{lmm:complementary scaling}, we have $\wt{F}_k \sim \wt{S}_{kk}$. Since $\wt{S}_{kk}$ are primitive with positive main diagonal, there is an exponent $p \in \N$ such that \cedit  $((\wt F \wt F^t)^px )_k \gtrsim \avg{x_k}$ \nc  for all $k \in  \llbracket L \rrbracket$ and $x\ge 0$.  
\cedit 
This implies that the off-diagonal $2 \times 2$ block matrix in \eqref{F tau limit} containing $\wt F$ in its $(1,2)$-block equals a direct sum $\oplus_k  \wh{F}_k$ of  $L$ irreducible off-diagonal $2 \times 2$ block matrices of the form
\[
\wh{F}_k:=
\begin{pmatrix}
0 &  \wt F_k 
\\
 \wt F_k^t  & 0 
\end{pmatrix}
\] each of which has $1$ and $-1$ as a nondegenerate  eigenvalue and all other eigenvalues in the interval $[-1+\eps,1-\eps]$ for some $\eps>0$.  
Since $F(\wh{v},\wh{v})=(\wh{v},\wh{v})$ and $\wh{v} \to 1$ for $\tau\downarrow 0$ by Corollary~\ref{crl:scaling problem}, we see that $\lim_{\tau\downarrow 0}\wt{F}_k 1 = 1$
 and $\lim_{\tau\downarrow 0}\wt{F}_k^t 1 = 1$. Now let $F_0$ be an accumulation point of $F$ as $\tau\downarrow 0$, or equivalently let  
 $\tau_n \downarrow 0$ be a sequence such that $F_0=\lim_{n \to \infty} F|_{\tau=\tau_n}$ exists along this sequence. 
 Then $F_0$ has the $L$-fold degenerate eigenvalues $1$ and $-1$ and otherwise all eigenvalues of $F_0$ lie in $[-1+\eps,1-\eps]$. The eigenvectors corresponding to the eigenvalues $\pm 1$ have the form $(Q_1^t y^{(k)},\pm Q_2^ty^{(k)})$,
 where $y^{(k)}_k:=1 \in \R^{d_k}$ and $y^{(k)}_l:=0\in \R^{d_l}$ for all $l \ne k$.  In particular, 
 \[
K_{r,0}:=\lim_{n \to \infty} K_r|_{\tau=\tau_n} = P\frac{2F_0}{1+F_0} P^*
 \]
 is a bounded matrix since  $(Q_1^t y^{(k)},- Q_2^ty^{(k)})$ does not lie in the image of $P^*$ for any $k$. This proves $\lim_{n \to \infty}\norm{K_r}\big|_{\tau=\tau_n} = \norm{K_{r,0}}< \infty$. 
\nc

Again, since  $\lim_{\tau\downarrow 0}\wh{v} = 1$, the formula \eqref{sigma through Fr} for $\sigma$ implies 
\[
\lim_{\tau\downarrow 0}\frac{\sigma(z)|_{\abs{z}^2 =\tau}}{\avg{v(\tau)w(\tau)}} = \frac{1}{\pi}\,,
\]
where the asymptotic behavior of $\avg{vw}$ is seen from 
\[
\tau v w = \frac{1-vSw}{vSw}
\]
due to \eqref{vw symmetry} and \eqref{vw equation with 1 on lhs}. Therefore, by Corollary~\ref{crl:scaling problem} again we have 
\[
-\lim_{\tau \downarrow 0}\frac{\log \avg{vw}}{\log \tau} =  \kappa-1\,.
\]
This concludes the proof of the theorem. 
\end{proof}

\begin{appendix}

\nc \section{Girko's trick}
\label{sec:Girko's trick}

\begin{proof}[Proof of Proposition~\ref{prop:Global law for A}]
The argument below follows the presentation in \cite{AK_pseudo}. 
Let $A \in \R^{K \times K} \otimes \R^{n \times n}$ be as in the statement of the proposition. {\cedit We begin by letting $X$ denote the centered, normalized version of $A$:   
\[
x_{\alpha\beta}^{ij} := \frac{1}{\sqrt{n}}\pb{a_{\alpha\beta}^{ij}-  p_{ij}}\,,
\]
in other words
\[ X = \frac{1}{\sqrt{n} } (A - \E[A]) \] }
The resolvent {\cedit $R:=R(z,\eta):=(\wh{A}_z-\ii \eta)^{-1}$ } of the Hermitization 
\[
\wh{A}_z:=\begin{pmatrix} 0 & \frac{1}{\sqrt{n}}A-z \\ \pb{\frac{1}{\sqrt{n}}A-z}^* &0  \end{pmatrix}
\]
of the rescaled adjacency matrix $\frac{1}{\sqrt{n }}A$ satisfies 
{\cedit \bels{R - G relation}{
R=G-G(\wh{P}\otimes E) \frac{1}{\frac{1}{\sqrt{n}}(\wh{P}\otimes E)+(\wh{P}\otimes E)G(\wh{P}\otimes E)}(\wh{P}\otimes E)G \,,
} }
where $G=G(z,\eta)$ is the resolvent of the Hermitization of $X$, i.e. 
\[
G=  \begin{pmatrix} \ii \eta & X-z \\ (X-z)^* & \ii \eta  \end{pmatrix}^{-1},
\]
and the inverse on the right hand side of \eqref{R - G relation} is taken on the range of  $ \wh{P} \otimes E$, where
\[
\wh{P}=\begin{pmatrix} 0 & P \\ P^t & 0  \end{pmatrix} ,
\]
 $P=(p_{ij})_{i,j=1}^K$ and $E$ is the matrix with constant entries equal to $\frac{1}{n}$. {\cedit Note that $\wh{P} \otimes E $ is the expectation of Hermitization of $A$, i.e. 
 \[\wh{P} \otimes E  = \E \begin{pmatrix} 0 & A \\ A^* & 0  \end{pmatrix}, \] and that the variances of the centered matrix $X$ are given by $s_{ij}:=\var(x_{\alpha\beta}^{ij}) = p_{ij}(1- p_{ij})$.} Now we use Girko's formula 
\begin{equation} \label{eq:Girkos formula}
\frac{1}{n} \sum_{w } f(w) -\int_{\C} f(z) \sigma(\dd z)
\frac{1}{2\pi } \int_{\C} \Delta f(z) h(z) \dd^2 z + O\Big(\frac{1}{T} \norm{\Delta f}_{L^1}\Big), \nc 
\end{equation} 
for a compactly supported $C^2$-function $f$ and $T>0$. Here,
  the sum is over $w \in \spec (\frac{1}{\sqrt{n }}A)$ and with the short hand  $\underline{R}:=( {\rm id}_2 \otimes {\rm id}_K\otimes \frac{1}{n} \tr) R$ we have 

\[
h(z) :=  \frac{1}{n} \sum_w \log |w-z|-\int_0^T \Big(\vartheta(z,\eta) -\frac{1}{1+\eta}\Big)\dd \eta.
\]
As in \cite[equation (4.7)] {AK_pseudo}we split $h$ into the contributions  $h = \sum_{i=1}^4h_i$ with 
\begin{align*} 
h_1(z) & := \int_{\eta_*}^T \bigg( \vartheta(z,\eta)  - \im \frac{1}{2K}\tr \underline{R}(z,\eta)\bigg) \dd \eta, & 
h_2(z) & := - \int_0^{\eta_*}  \im \frac{1}{2K}\tr \underline{R}(z,\eta)\dd \eta \\ 
h_3(z) & := \frac{1}{4n} \sum_{\lambda \in \spec(\wh{A}_z)} \log \bigg( 1 + \frac{\lambda^2}{T^2} \bigg) 
 - \log \bigg( 1 + \frac{1}{T} \bigg), & 
h_4(z) & := \phantom{-} \int_0^{\eta_*} \vartheta(z,\eta) \dd \eta,
\end{align*} 
\nc
where we set $\vartheta(z,\eta):=\avg{v(\abs{z}^2, \eta)}$. 
We also split $h_1 = h_{1,1}+h_{1,2}$, where
\[
h_{1,1}(z) := \int_{\eta_*}^T \bigg( \vartheta(z,\eta)  - \im \frac{1}{2K}\tr \underline{G}(z,\eta)\bigg) \dd \eta\,, \quad
h_{1,2}(z) :=\im \frac{1}{2K}\tr \int_{\eta_*}^T \big( \underline{G}(z,\eta)  - \underline{R}(z,\eta)\big) \dd \eta. 
\]
We show that uniformly, for  $z $ in a compact set, bounded away from zero, all error terms converge to zero in probability in the  limit $n\to \infty$, where we choose $\eta_*:= n^{-\eps}$ for a sufficiently small $\eps>0$ and $T:= \frac{\sqrt{n}}{\eta_*}$, proving the global law for $\frac{1}{\sqrt{n }}A$.  Since $ \eta \mapsto \vartheta(z, \eta)$ is bounded for $z\ne 0$  we see that $|h_4|=O(\eta_*)$. Since $\norm{\wh{A}_z}= O(\sqrt{n})$ with  high probability we  have $|h_3| = O(\eta_*)$.  To estimate $h_2$ we use
\[ -h_2(\zeta)  \leq C\eta_*^{1/2}\pB{\log \eta_* + \log \min_{\lambda }|\lambda|}\frac{1}{n} 
\im \tr R(z,\eta_*^{1/2}) ,\] 
where the minimum is taken over $\lambda \in \spec(\wh{A}_z)$. To see that the right hand side converges to zero, we first replace  $R$ by $G$ and estimate the difference 
\bels{estimate difference R and G}{
\norm{\tr \p{\underline{G}  - \underline{R}}} \le C\frac{1}{n}\tr G^*G  = \frac{C}{n\eta}\tr \im G \,.
}
Now we use that   $\im \frac{1}{2K}\tr \underline{G}(z,\eta) \to\vartheta(z,\eta) $ in probability, which was shown in \cite{Altinhomogeneous, Cookvarprof1}.  More precisely, we  use the quantitative bound from \cite[Theorem 2.1]{Erdos2017Correlated}, which states that for any given $\delta>0$ and $\eps=\eps(\delta)>0$ small enough we have 
\[
\sup_{z}\sup_{\eta_* \le \eta \le T }\P\pB{\Big|\im \frac{1}{2K}\tr \underline{G}(z,\eta)-\vartheta(z,\eta)\Big| \ge \frac{n^{\delta}}{n \eta^2 }} \le Cn^{-k},
\]
for all $k \in \N$ and a positive constant $C=C_{\delta,k}>0$, 
where the supremum over $z$ is over a compact set in the complex plane. 
\nc
In combination with \eqref{estimate difference R and G} this also shows that for any $\delta>0$,  $k \in \N$ and small enough $\eps>0$ the quantitative bound
\[
\P\pB{\sup_z |h_1(z)| > n^{-1+ \delta +\eps}} \le C n^{-k}
\]
holds. The integral over $\eta$ is treated here by a standard Monte Carlo sampling argument. 
\nc It remains to see that $\eta_*^{1/2}\log \min_{\lambda }|\lambda|$ from the estimate on $h_2$ converges to zero. This follows from the estimate on the smallest eigenvalue of Lemma~\ref{lmm:smallest ev} of $\wh{A}_z$ below. The integral over $z$ in \eqref{eq:Girkos formula} is again treated by Monte Carlo sampling and the proposition is proven. \nc 
\end{proof}
\nc
\begin{lemma}\label{lmm:smallest ev}
Let $B_n \in \R^{nK \times nK}$ be a deterministic matrix with $\norm{B_n} \le n^C$ for some constant $C>0$ and let $A_n$ be the adjacency matrix of the DSBM with connection probabilities $P$, where  $P$ has support. Then there are  constants $\wh C,\alpha,\beta>0$, depending only on $C$ and $P$, such that
\[
\P\pB{\norm{(A_n+B_n)^{-1}}\ge \wh C\1n^\beta} \le n^{-\alpha}
\]
hold for all $n \in \N$. 
\end{lemma}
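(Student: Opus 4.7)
The goal is a polynomial lower bound on $s_{\min}(A_n+B_n)$, since $\norm{(A_n+B_n)^{-1}} = s_{\min}(A_n+B_n)^{-1}$. The plan is to reduce, via a harmless block permutation of rows, to a standard least-singular-value estimate for random matrices of the form \emph{deterministic shift plus independent centred Bernoulli} whose variance profile dominates a positive multiple of a permutation matrix.

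\textbf{Step 1 (permutation reduction).} Since $P$ has support, by Frobenius--K\"onig there exists $\pi\in\mathrm{Sym}(K)$ with $p_{\pi(i),i}\in(0,1)$, hence $s_{\pi(i),i}\ge c_0>0$, for all $i\in\llbracket K\rrbracket$. Let $\Pi\in\R^{nK\times nK}$ be the block permutation matrix implementing $\pi$ on the $K$ batches. Because $\norm{(A_n+B_n)^{-1}}=\norm{(\Pi A_n+\Pi B_n)^{-1}}$, we may replace $(A_n,B_n)$ by $(\Pi A_n,\Pi B_n)$. After this relabelling, the $(i,i)$ diagonal $n\times n$ block of $A_n$ is an iid Bernoulli matrix with parameter in $(0,1)$, so the corresponding block of $W_n:= A_n-\E A_n$ consists of iid centred Bernoulli entries of variance at least $c_0$.

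\textbf{Step 2 (least singular value bound).} Write $A_n+B_n=\Delta_n+W_n$ with $\Delta_n:=\E A_n+B_n$ deterministic, $\norm{\Delta_n}\le \norm{B_n}+nK\le 2n^C$ for large $n$, and $W_n$ having independent centred entries bounded by $1$ whose variance profile now dominates a $c_0$-multiple of a permutation. For matrices of this form one has a polynomial bound
\[
\P\bigl(s_{\min}(\Delta_n+W_n)\le n^{-\beta}\bigr)\le n^{-\alpha}
\]
with $\alpha,\beta>0$ depending only on $c_0$ and $C$. It can be obtained by conditioning on the entries of $W_n$ outside the diagonal blocks (leaving a block-diagonal iid Bernoulli perturbation) and then running the Rudelson--Vershynin / Tao--Vu programme: the distance from each column of the resulting shifted matrix to the span of the remaining columns is controlled by the Erd\H{o}s--Littlewood concentration-function estimate for Bernoulli sums, combined with an $\varepsilon$-net argument on compressible directions and an LCD / small-ball estimate on incompressible ones; summing $s_{\min}(M)^{-2}\le \sum_i \dist(c_i,H_i)^{-2}$ over the $nK$ columns and applying a union bound yields the claim, and hence the statement of the lemma with $\wh C$, $\alpha$, $\beta$ as asserted.

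\textbf{Main obstacle.} The principal difficulty is that the variance profile $S$ may contain zero entries (wherever $p_{ij}\in\{0,1\}$), so the classical invertibility estimates that assume a uniformly positive variance on every entry do not apply directly. The \emph{has-support} hypothesis on $P$ is exactly the structural condition that guarantees an $n$-block of iid Bernoulli entries of variance $\ge c_0$ along every row and column of $W_n$, which is the input needed for the column-distance anti-concentration to give a polynomial lower bound uniformly in the unit vector. A secondary, but well-studied, complication is that the Bernoulli distribution is discrete, which prevents invoking smooth-density smallest-singular-value bounds and forces the use of the finer Rudelson--Vershynin / Rebrova--Tikhomirov machinery.
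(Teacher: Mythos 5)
Your Step 1 (permuting so that $P$ has a positive main diagonal, which is what ``$P$ has support'' provides) matches the paper's first step. From there, however, the paper does \emph{not} run a global column-distance argument. It peels the $nK\times nK$ matrix one $n\times n$ diagonal block at a time by a Schur-complement recursion, formalised in Lemma~\ref{lmm:smallest ev as corr}: writing the matrix as $\left(\begin{smallmatrix} A & B \\ C & D \end{smallmatrix}\right)$ with $D$ the last (iid Bernoulli) diagonal block, the least singular value of the whole matrix is reduced to that of the Schur complement $CA^{-1}B+M+D$, which is an $n\times n$ iid Bernoulli matrix plus a deterministic shift \emph{independent} of $D$; Tao--Vu's Theorem~2.1 of \cite{tao2008} then applies directly, and the full bound follows by induction on $K$. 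The recursion thus algebraically exposes one fully random $n\times n$ block at each step, together with an independent shift.

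Your Step 2 instead conditions on the off-diagonal blocks and proposes to run the Rudelson--Vershynin programme on the resulting matrix $M+\diag(D_1,\dots,D_K)$ whose random part is block-diagonal. This contains a genuine gap. For a column $c_i$ belonging to block $j$, the randomness of $c_i$ is confined to its $n$ coordinates in row-block $j$, so $\dist(c_i,H_i)=\abs{\langle v_i,c_i\rangle}$ enjoys useful Bernoulli anti-concentration only if the unit normal $v_i$ to $H_i$ has polynomially-lower-bounded mass on those $n$ coordinates. Nothing in your sketch establishes this; $v_i$ is determined by the other $nK-1$ columns and could in principle be concentrated on row-blocks that carry none of the randomness of $c_i$. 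Proving a quantitative lower bound on the block restriction $\norm{(v_i)_j}$ is a structural statement of essentially the same difficulty as the lemma itself, and the compressible/incompressible and LCD tools you invoke are formulated for columns whose entries are all random and do not supply such a bound automatically. This is precisely the obstacle the Schur-complement induction is designed to sidestep.
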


\begin{proof}
By permuting the blocks with the permutations $Q_1$ and $Q_2$ from \eqref{def wt S} and using that applying the permutations does not change the singular values we can assume that $P$ has positive entries along its main diagonal. Now we iteratively apply Lemma~\ref{lmm:smallest ev as corr}, which is a consequence of \cite[Theorem 2.1]{tao2008}. 
\end{proof}

The following lemma, first proven in \cite{MR3403052}, gives a bound on the least singular value of a block matrix in terms of one of its blocks.

\begin{lemma}\label{lmm:smallest ev as corr}
Let $a, c_1'$ be positive constants.
Let $D$ be an $n \times n$  random matrix with i.i.d. entries and finite variance and $M$ be an $n \times n$ deterministic matrix of spectral norm at most $n^{c_1'}$. Let $A$ be an $(N-n) \times (N-n)$ matrix, $B$ be an $(N-n)\times n$ matrix, and $C$ be an $n\times (N-n)$ matrix. Assume there exist $b''$ and an event occurring with probability $1-N^{-a}$ on which $A^{-1}$  has norm bounded by $N^{b''}$ and $\|B\|, \|C\|$ have norm $O(1)$. Furthermore, assume that $D$ is independent of $A$, $B$ and $C$.

With the definition 
\[
Y := \begin{pmatrix} A & B \\ C & D \end{pmatrix}, \qquad \mathcal{M}:= \begin{pmatrix} 0 & 0 \\ 0 & M \end{pmatrix},
\]
 there are positive constants $b'$ and $c_2'$ (depending on $a$, $c_1'$ and $b''$) such that
\[ \P( \norm{( \mathcal{M}+Y)^{-1}} \ge N^{b'}) \leq c_2' N^{-a}.\]
\end{lemma}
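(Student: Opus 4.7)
The plan is to invoke the Schur complement formula to reduce the problem of bounding $\|(\mathcal{M}+Y)^{-1}\|$ to bounding the inverse of a single random $n\times n$ block, which will then be controlled by the least singular value estimate of Tao--Vu \cite{tao2008} cited in the paper.

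First, I would perform the block decomposition. On the event $\mathcal{G}$ of probability at least $1-N^{-a}$ on which $\|A^{-1}\|\le N^{b''}$ and $\|B\|,\|C\|=O(1)$, the Schur complement
\[
S \;:=\; (M+D)-CA^{-1}B \;=\; \bigl(M-CA^{-1}B\bigr)+D
\]
is well defined, and the standard block-inverse identity yields
\[
(\mathcal{M}+Y)^{-1}
=
\begin{pmatrix}
A^{-1}+A^{-1}BS^{-1}CA^{-1} & -A^{-1}BS^{-1}\\
-S^{-1}CA^{-1} & S^{-1}
\end{pmatrix}.
\]
Taking operator norms and using the assumed deterministic bounds on $A^{-1}$, $B$, $C$, this gives the estimate
\[
\|(\mathcal{M}+Y)^{-1}\|
\;\lesssim\;
N^{b''}+N^{2b''}\|S^{-1}\|
\]
on $\mathcal{G}$, so it suffices to show that $\|S^{-1}\|\le N^{\beta_0}$ with probability at least $1-O(N^{-a})$ for some constant $\beta_0>0$.

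Next, I would exploit the independence of $D$ from $(A,B,C)$ by conditioning. On $\mathcal{G}$ the matrix $M':=M-CA^{-1}B$ is a measurable function of $(A,B,C)$ alone, and obeys the deterministic bound
\[
\|M'\|\;\le\;\|M\|+\|C\|\,\|A^{-1}\|\,\|B\|\;\lesssim\; n^{c_1'}+N^{b''}\;\le\; N^{c}
\]
for some $c=c(c_1',b'')>0$. Conditioning on any realisation of $(A,B,C)\in \mathcal{G}$, the matrix $M'$ becomes a fixed deterministic matrix of spectral norm at most $N^c$ and $D$ retains its i.i.d.~distribution. Thus $S=M'+D$ is exactly of the form to which \cite[Theorem~2.1]{tao2008} applies, yielding the existence of constants $\beta_0>0$ and $c_0>0$ (depending only on $a$, $c$, i.e.\ only on $a,c_1',b''$) such that
\[
\P\pb{\|(M'+D)^{-1}\|\ge N^{\beta_0}\,\big|\,A,B,C}\;\le\; c_0\, N^{-a}
\]
for every conditioning realisation in $\mathcal{G}$.

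Finally, I would assemble the pieces: integrating the conditional estimate over the distribution of $(A,B,C)$ restricted to $\mathcal{G}$ and using $\P(\mathcal{G}^c)\le N^{-a}$, one obtains
\[
\P\pb{\|S^{-1}\|\ge N^{\beta_0}}\;\le\; (c_0+1)\,N^{-a}.
\]
Combining this with the block-inverse bound above, we find $\|(\mathcal{M}+Y)^{-1}\|\le C'N^{2b''+\beta_0}$ off a set of probability at most $c_2'N^{-a}$, which proves the claim with $b':=2b''+\beta_0+1$ and a suitable $c_2'$. The main obstacle is purely bookkeeping: one must check that the constants $(\beta_0,c_0)$ produced by Tao--Vu's theorem depend only on the allowed parameters $(a,c_1',b'')$ and not on the particular realisation of $(A,B,C)$ conditioned upon, which is guaranteed because Tao--Vu's statement is uniform over deterministic perturbations of polynomially bounded norm.
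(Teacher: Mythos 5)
Your proof is correct and takes essentially the same route as the paper's: both isolate the Schur complement $S=(M+D)-CA^{-1}B$, condition on $(A,B,C)$ to render $M-CA^{-1}B$ deterministic, and invoke the Tao--Vu least-singular-value theorem for an i.i.d.\ matrix plus a polynomially bounded deterministic perturbation. The only cosmetic difference is that you write out the explicit block-inverse formula, whereas the paper works directly with a minimizing unit vector and a two-case analysis depending on whether the lower component $u_2$ is small, thereby sidestepping the need to first establish invertibility of $S$; the mathematical content is the same.
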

\begin{proof}
Recall that the least singular value of a matrix $X$ can be characterized as $\|X^{-1}\|^{-1}=\min_{\|u\|=1} \|X u\|$.
Let $u$ be a vector such that {\cedit $\|(Y+\mathcal{M})u\| = \norm{(Y+\mathcal{M})^{-1}}^{-1}$} and partition $u$  and $ v:=(Y+\mathcal{M}) u $ as 
\[
u=\begin{pmatrix} u_1 \\ u_2 \end{pmatrix} ,\qquad v= \begin{pmatrix} v_1 \\ v_2 \end{pmatrix} \,,
\] where $u_1, v_1$ are $N-n$ dimensional and $u_2, v_2$ are $n$ dimensional. Expressing $v = (Y+\mathcal{M})u$ is terms of the blocks leads to \[ A u_1 + B u_2 = v_1 \,, \qquad C u_1 + (D+M) u_2 = v_2. \] If $\|u_2\| = o(N^{-b})$ then $A u_1 = v_1 + o(N^{-b})$ and by our assumption on the least singular value of $A$ we conclude, $\| v_1 \| \geq c N^{-b''}$ with probability $1 - N^{-a}$.

Otherwise, using the assumption that there is an event of probability $1-N^{-a}$ on which $A$ is invertible, we solve for $u_1$. Substituting into the second equation gives
\[ (C A^{-1} B + M + D) u_2 = v_2 -A^{-1} v_1. \]
Since $C A^{-1} B$ is independent of $D$ and has norm bounded by $N^{b''}$ with probability $1-N^{-a}$, the result \cite[Theorem 2.1]{tao2008} implies $\| v_2 -A^{-1} v_1 \| \geq n^{-b} \| u_2 \|$. Implying that $\| v \| \geq \frac{1}{2} n^{-2b-b''}$.
\end{proof}

\section{Non-negative matrices}
\label{sec:Non-negative matrices}
Here we collect a few facts and definitions concerning matrices with nonnegative entries that we use in this work. We refer to \cite{brualdi_ryser_1991} for details.

\begin{definition}Let $R = (r_{ij})_{i,j=1}^K$ be a matrix with non-negative entries. For any permutation $\pi$ of $\llbracket K \rrbracket$ we call $(r_{i\pi(i)})_{i=1}^K$ a diagonal of $R$. The diagonal with $\pi=\rm{id}$ is called main diagonal.  The matrix $P=(\delta_{i\pi(j)})_{i,j=1}^K$ is called a permutation matrix. The $0$-$1$ matrix $Z_R:=(\bbm{1}(r_{ij}>0))_{i,j=1}^K$ is called the zero pattern of $R$. 
\begin{enumerate}
\item $R$ is said to have support if it has a positive diagonal. Equivalently, $R$ has support if there is a positive constant  $c>0$ such that $R \ge c P$ holds entry wise for some permutation matrix $P$. 
\item $R$ is said to have total support if every positive entry of $R$ lies on some positive diagonal, i.e. if its zero pattern coincides with that of a sum of permutation matrices.   
\item $R$ is said to be fully indecomposable (FID) if for any index sets $I,J \subset \llbracket K \rrbracket$ with $\abs{I} + \abs{J} \ge K$ the submatrix $(r_{ij})_{i \in I,j \in J}$ is not a zero-matrix. 
\end{enumerate}
\end{definition}

The following facts about FID matrices are well known in the literature. 

\begin{lemma} 
\label{lmm:FID matrices}
Let $R \in \R^{K \times K}$ be a matrix with non-negative entries. 
\begin{enumerate}
\item If $R$ is FID and $P$ a permutation matrix, then $RP$ and $PR$ are FID.
\item 
A  matrix $R$ with non-negative entries is FID if and only if there exist a permutation matrix $ {P}$ 
such that $ {R}{P} $ is irreducible and has positive main diagonal. 
\item 
 If $ {R} $ is FID, then there is an integer $k \in \N$ such that $R^k$ has strictly positive entries,  i.e. $R$ is primitive. 
\end{enumerate}
\end{lemma}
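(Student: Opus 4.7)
The plan is to prove the three parts in sequence. Part (1) is essentially a restatement of the FID property: if $P$ is the permutation matrix of a bijection $\sigma:\llbracket K\rrbracket\to\llbracket K\rrbracket$, then $(RP)[I,J]=R[I,\sigma(J)]$ and $|\sigma(J)|=|J|$, so any zero submatrix of $RP$ with $|I|+|J|\ge K$ would produce a zero submatrix of $R$ of the same total size, contradicting the FID of $R$. The argument for $PR$ is symmetric.

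For part (2), I would first record two consequences of FID that I use freely below: (a) every FID matrix is irreducible, because $M[I,I^c]=0$ for some $\emptyset\neq I\subsetneq\llbracket K\rrbracket$ would be a zero submatrix of total size $|I|+|I^c|=K$; and (b) every FID matrix has support, since if no perfect matching existed in the bipartite graph of $R$, K\"onig's theorem would produce a cover by $|I|$ rows and $|J|$ columns with $|I|+|J|<K$, making $R[I^c,J^c]$ a zero submatrix of total size $2K-|I|-|J|>K$. The forward implication in (2) is then obtained by taking $\pi$ from (b) and letting $P$ be the column permutation moving column $\pi(i)$ to position $i$; then $RP$ has positive main diagonal, is FID by part (1), and irreducible by (a). The reverse implication is settled by arguing directly that irreducibility plus a positive main diagonal forces FID: any zero submatrix $M[I,J]$ with $|I|+|J|\ge K$ must have $I\cap J=\emptyset$ (otherwise it would contain a positive diagonal entry), hence $|I|+|J|=K$ and $J=I^c$, contradicting irreducibility. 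Applying part (1) then gives $R=MP^{-1}$ FID.

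Part (3) is the main content. The central step is the multiplicative lemma: the product of two FID matrices is FID. Assume $R,S\in\R^{K\times K}$ are both FID and suppose for contradiction that $(RS)[I,J]=0$ with $|I|+|J|\ge K$ and $I,J\neq\emptyset$. Then for every $i\in I$, $j\in J$ one has $\sum_k R_{ik}S_{kj}=0$, so each term vanishes, forcing every index $k$ to lie in either $K_1:=\{k:R[I,\{k\}]=0\}$ or $K_2:=\{k:S[\{k\},J]=0\}$. Setting $K_2':=K_2\setminus K_1$ produces a disjoint partition $K_1\sqcup K_2'=\llbracket K\rrbracket$ with $R[I,K_1]=0$ and $S[K_2',J]=0$, and the arithmetic identity $|I|+|K_1|+|J|+|K_2'|=|I|+|J|+K\ge 2K$ forces at least one of $|I|+|K_1|\ge K$ or $|J|+|K_2'|\ge K$, contradicting the FID of $R$ or of $S$; the degenerate cases $K_1=\emptyset$ or $K_2'=\emptyset$ are handled via the elementary observation that FID forbids any identically zero row or column. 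Iterating the lemma gives $R^k$ FID, in particular irreducible, for all $k\ge 1$. If $R$ had period $d\ge 2$, the standard block-cyclic normal form for irreducible nonnegative matrices would make $R^d$ similar to a block-diagonal matrix with $d$ diagonal blocks and hence reducible, contradicting the irreducibility of $R^d$. Therefore the period of $R$ is $1$ and $R$ is primitive.

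The main obstacle will be the multiplicative lemma in part (3); once it is established, the rest of (3) is immediate and parts (1) and (2) are essentially direct unpackings of the definition. Inside that lemma the delicate point is not the counting inequality itself but verifying that the partition $K_1\sqcup K_2'$ is nontrivial enough to allow the FID hypothesis to be invoked on both $R$ and $S$, which is why the non-vanishing of every row and column of a FID matrix is a necessary preliminary observation.
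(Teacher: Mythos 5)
The paper states Lemma~\ref{lmm:FID matrices} without proof, noting only that these are well-known facts and pointing to Brualdi--Ryser \cite{brualdi_ryser_1991}, so there is no in-paper argument to measure yours against. Your proof is correct and self-contained. Parts (1) and (2) are the expected unpacking of the definition; the preliminary observations (a) FID $\Rightarrow$ irreducible and (b) FID $\Rightarrow$ support via K\"onig's theorem are both used correctly, and the reverse direction of (2) works because $I\cap J=\emptyset$ together with $|I|+|J|\geq K$ forces $J=I^c$ (you are implicitly taking $I,J\neq\emptyset$, which is the standard convention for the FID definition and clearly intended by the paper). The real content is your multiplicative lemma in part (3), that a product of FID matrices is FID: the partition $K_1\sqcup K_2'$ of the intermediate index set, the arithmetic $(|I|+|K_1|)+(|J|+|K_2'|)\ge 2K$, and the treatment of the degenerate cases $K_1=\emptyset$ or $K_2'=\emptyset$ (which reduce to a zero column of $S$ or a zero row of $R$, forbidden by FID) are all sound. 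Iterating to get $R^k$ FID and hence irreducible for all $k$, and then invoking the cyclic normal form to exclude period $d\geq2$, cleanly yields primitivity. The only cosmetic point: the block-cyclic normal form gives \emph{permutation} similarity, which is what you need (reducibility is not preserved under general similarity), and that is evidently what you mean.
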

\begin{theorem}[Frobenius-K{\"o}nig theorem]
A matrix $S \in \R^{n \times n}$ with non-negative entries does not have support if and only if $S$ contains an $r\times s$-submatrix of zeros with $r+s=n+1$.
\end{theorem}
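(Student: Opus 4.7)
The plan is to prove both implications of the equivalence separately; the statement is classical and reduces to a bipartite matching argument.

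For the \emph{if} direction, suppose $S$ contains an $r\times s$ zero submatrix indexed by rows $I$ and columns $J$, with $|I|=r$, $|J|=s$, and $r+s=n+1$. For any permutation $\pi$ of $\llbracket n \rrbracket$, since $|\pi(I)|+|J|=r+s=n+1>n$, the sets $\pi(I)$ and $J$ must intersect. Hence there exists $i\in I$ with $\pi(i)\in J$, giving $s_{i,\pi(i)}=0$, so no diagonal of $S$ is positive and $S$ lacks support.

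For the \emph{only if} direction, I would invoke Hall's marriage theorem. Form the bipartite graph $G$ on vertex classes $\llbracket n \rrbracket$ (rows) and $\llbracket n \rrbracket$ (columns), with an edge $(i,j)$ precisely when $s_{ij}>0$. A positive diagonal of $S$ corresponds to a perfect matching of $G$, so if $S$ has no support then $G$ admits no perfect matching. Hall's theorem then furnishes a set of rows $I\subset \llbracket n \rrbracket$ whose neighborhood $N(I)\subset\llbracket n \rrbracket$ satisfies $|N(I)|<|I|$. Setting $J:=\llbracket n \rrbracket\setminus N(I)$, the definition of $N(I)$ gives $s_{ij}=0$ for all $i\in I$ and $j\in J$, while
\[
|I|+|J| \;=\; |I| + n - |N(I)| \;\geq\; n+1.
\]
Shrinking $J$ (or $I$) if the inequality is strict produces a zero submatrix of the precisely required dimensions $r\times s$ with $r+s=n+1$.

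The main obstacle is essentially absent: the argument rests entirely on the translation to bipartite matching and a single application of Hall's theorem (equivalently König's min-max theorem). The only minor care needed is to shrink the produced zero block down to the exact size $r+s=n+1$ rather than merely $r+s\geq n+1$, which is immediate since zero submatrices are monotone under restriction of row- and column-index sets.
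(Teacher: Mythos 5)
Your proof is correct. Note that the paper does not prove this statement at all: it appears in Appendix B as a collected classical fact, with a pointer to Brualdi--Ryser for details, so there is no paper proof to compare against. Your argument is the standard one: the forward direction is a clean pigeonhole count showing $\pi(I)\cap J\neq\emptyset$ for every permutation $\pi$, and the converse is exactly Hall's marriage theorem applied to the bipartite graph of positive entries, followed by taking the complement of the neighborhood and trimming to equality $r+s=n+1$. Both steps are sound, and the trimming remark correctly addresses the only subtlety (Hall gives $\geq n+1$, not $=n+1$). Nothing to fix.
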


\section{Examples}
\label{sec:Examples}

In this section we present several examples with $S$ being a $4 \times 4$ matrix. In each example the normal form $\wt S$ will be the same but the matrices $Q_1, Q_2$ used to bring $S$ into the normal form will be different, leading to different behavior in the limiting singularity at 0. From the invariance of eigenvalues by unitary conjugation it suffices to $Q_1 = I$ and thus $P = Q_2^t$ from its definition before \eqref{def of orders}. For each of these examples, we compute the $\kappa$ from \eqref{eq:defkapp}, which by Theorem \ref{thr:Singularity degree} gives the severity of the singularity near $0$. In the graphs below, the $\LHD$ edges are denoted by black, solid lines and the $\prec$ edges are denoted by red, dashed lines.

\begin{example}
Let $S$, $P$ and $SP$ be given by
\[
S = \mfour{0 & 1 & 1 &0}{0 & 0 & 1 &1}{1 & 0 & 0 &1}{1 & 0 & 0 &0},\qquad  P =  \mfour{0 & 0 & 0 &1}{1 & 0 & 0 &0}{0 & 1 & 0 &0}{0 & 0 & 1 &0},\qquad \wt S = SP =  \mfour{ 1 & 1 &0 & 0}{0 & 1 & 1 &0}{0 & 0 & 1 &1}{0 & 0 & 0 &1}. 
\]The associated graph is: 

\begin{center}
\begin{tikzpicture}[scale=1, xshift=-2cm]
   
    \foreach \x in {1,...,4} {
        \node[circle, black, draw, minimum size=8.4pt, inner sep=0pt, fill=black] (v\x) at (\x,0) {};
    }

     \draw[->, bend left=45, >=stealth, black, line width=2pt] (v1) to[out=60, in=115, looseness=1] (v2);
     \draw[->, bend left=45, >=stealth, black, line width=2pt] (v2) to[out=60, in=115, looseness=1] (v3);
     \draw[->, bend left=45, >=stealth, black, line width=2pt] (v3) to[out=60, in=115, looseness=1] (v4);

       \draw[->, >=stealth, red, dashed, line width=2pt] (v2) to[out=-150, in=-30, looseness=1]  (v1);
       \draw[->, >=stealth, red, dashed, line width=2pt] (v3) to[out=-150, in=-30, looseness=1]  (v2);
       \draw[->, >=stealth, red, dashed, line width=2pt] (v4) to[out=-150, in=-30, looseness=1]  (v3);
         \draw[->, bend left=45, >=stealth, red,  dashed, line width=2pt] (v1) to[out=-60, in=-115, looseness=1] (v4);

\end{tikzpicture}
\end{center}
and $\kappa = \frac{1}{2}$.

\end{example}

\begin{example}
Let $S$, $P$ and $SP$ be 
\[
S = \mfour{0 & 0 & 1 &1}{1 & 0 & 0 &1}{1 & 1 & 0 &0}{0 & 1& 0 &0}, \qquad P =  \mfour{0 & 0 & 1 &0}{0 & 0 & 0 &1}{1 & 0 & 0 &0}{0 & 1 & 0 &0}, \qquad \wt S = SP =  \mfour{ 1 & 1 &0 & 0}{0 & 1 & 1 &0}{0 & 0 & 1 &1}{0 & 0 & 0 &1}. 
\]
The associated graph is: 

\begin{center}
\begin{tikzpicture}[scale=1, xshift=-2cm]
   
    \foreach \x in {1,...,4} {
        \node[circle, black, draw, minimum size=8.4pt, inner sep=0pt, fill=black] (v\x) at (\x,0) {};
    }

     \draw[->, bend left=45, >=stealth, black, line width=2pt] (v1) to[out=60, in=115, looseness=1] (v2);
     \draw[->, bend left=45, >=stealth, black, line width=2pt] (v2) to[out=60, in=115, looseness=1] (v3);
     \draw[->, bend left=45, >=stealth, black, line width=2pt] (v3) to[out=60, in=115, looseness=1] (v4);
     
       \draw[<->, >=stealth, red, dashed, line width=2pt] (v1) to[out=-60, in=-120, looseness=1]  (v3);
       \draw[<->, >=stealth, red, dashed, line width=2pt] (v2) to[out=-60, in=-120, looseness=1]  (v4);  
       
           
\end{tikzpicture}
\end{center}
and $\kappa = \frac{1}{3}$.
\end{example}

\begin{example}
Let $S$, $P$ and $SP$ be 
\[
{\cedit S = \mfour{0 & 0 & 1 &1}{0 & 1 & 1 &0}{1 & 1 & 0 &0}{1 & 0& 0 &0},}\qquad P =  \mfour{0 & 0 & 0 &1}{0 & 0 & 1&0}{0 & 1 & 0 &0}{1 & 0 & 0 &0}, \qquad \wt S = SP =  \mfour{ 1 & 1 &0 & 0}{0 & 1 & 1 &0}{0 & 0 & 1 &1}{0 & 0 & 0 &1}.
\]
 The associated graph is: 

\begin{center}
\begin{tikzpicture}[scale=1, xshift=-2cm]
   
    \foreach \x in {1,...,4} {
        \node[circle, black, draw, minimum size=8.4pt, inner sep=0pt, fill=black] (v\x) at (\x,0) {};
    }

     \draw[->, bend left=45, >=stealth, black, line width=2pt] (v1) to[out=60, in=115, looseness=1] (v2);
     \draw[->, bend left=45, >=stealth, black, line width=2pt] (v2) to[out=60, in=115, looseness=1] (v3);
     \draw[->, bend left=45, >=stealth, black, line width=2pt] (v3) to[out=60, in=115, looseness=1] (v4);
     
       \draw[<->, >=stealth, red, dashed, line width=2pt] (v1) to[out=-60, in=-120, looseness=1]  (v4);
       \draw[<->, >=stealth, red, dashed, line width=2pt] (v2) to[out=-30, in=-150, looseness=1]  (v3);

\end{tikzpicture}
\end{center}
and $\kappa = \frac{1}{4}$.
\end{example}

\end{appendix}

\providecommand{\bysame}{\leavevmode\hbox to3em{\hrulefill}\thinspace}
\providecommand{\MR}{\relax\ifhmode\unskip\space\fi MR }
\providecommand{\MRhref}[2]{%
  \href{http://www.ams.org/mathscinet-getitem?mr=#1}{#2}
}
\providecommand{\href}[2]{#2}

\end{document}